\begin{document}

\newcommand{\nc}{\newcommand}

\nc{\pr}{\noindent{\em Proof. }} \nc{\g}{\mathfrak g}
\nc{\n}{\mathfrak n} \nc{\opn}{\overline{\n}}\nc{\h}{\mathfrak h}
\renewcommand{\b}{\mathfrak b}
\nc{\Ug}{U(\g)} \nc{\Uh}{U(\h)} \nc{\Un}{U(\n)}
\nc{\Uopn}{U(\opn)}\nc{\Ub}{U(\b)} \nc{\p}{\mathfrak p}
\renewcommand{\l}{\mathfrak l}
\nc{\z}{\mathfrak z} \renewcommand{\h}{\mathfrak h}
\nc{\m}{\mathfrak m}
\renewcommand{\k}{\mathfrak k}
\nc{\opk}{\overline{\k}}
\nc{\opb}{\overline{\b}}

\newtheorem{theorem}{Theorem}{}
\newtheorem{lemma}[theorem]{Lemma}{}
\newtheorem{corollary}[theorem]{Corollary}{}
\newtheorem{conjecture}[theorem]{Conjecture}{}
\newtheorem{proposition}[theorem]{Proposition}{}
\newtheorem{axiom}{Axiom}{}
\newtheorem{remark}{Remark}{}
\newtheorem{example}{Example}{}
\newtheorem{exercise}{Exercise}{}
\newtheorem{definition}{Definition}{}

\title{The Poisson geometry of the conjugation quotient map for simple algebraic groups and deformed Poisson W--algebras}

\author[A. Sevostyanov]{A. Sevostyanov}
\address{Institute of Pure and Applied Mathematics \\
University of Aberdeen \\ Aberdeen AB24 3UE \\ United Kingdom }
\email{a.sevastyanov@abdn.ac.uk}

\thanks{2000 {\em Mathematics Subject Classification} Primary 22E46; Secondary 53D17 \\
{\em Key words and phrases.} Algebraic group, Transversal slice, Poisson structure}

\begin{abstract}
We define Poisson structures on certain transversal slices to conjugacy classes in complex simple algebraic groups. These
slices are associated to the elements of the Weyl group, and
the Poisson structures on them are analogous to the Poisson structures introduced in papers \cite{DB,Pr} on the Slodowy slices in complex simple Lie algebras.
The quantum deformations of these Poisson structures are known as W--algebras of finite type. As an application of our definition we obtain
some new Poisson structures on the coordinate rings of simple Kleinian singularities.
\end{abstract}

\maketitle

\pagestyle{myheadings}

\markboth{A. SEVOSTYANOV}{CONJUGATION QUOTIENT MAP}

\section*{Introduction}

In 1970 Brieskorn conjectured that simple singularities can be obtained as intersections of the nilpotent cone of a complex simple Lie algebra $\g$ with transversal slices to adjoint orbits of subregular nilpotent elements in $\g$ (see \cite{Br}). This conjecture was proved by Slodowy in book \cite{SL} where for each nilpotent element $e$ a suitable transversal slice $s(e)$ to the adjoint orbit of $e$ was constructed.

The simple singularities appear as some singularities of the fibers of the adjoint quotient map $\delta_{\g}: \g \rightarrow \h/W$ generated by the inclusion $\mathbb{C}[\h]^W\simeq
\mathbb{C}[\g]^G\hookrightarrow \mathbb{C}[\g]$, where $\h$ is a Cartan subalgebra of $\g$ and $W$ is the Weyl group of the pair $(\g, \h)$. The fibers of
the adjoint quotient map are unions of adjoint orbits in $\g$. Each fiber of
$\delta_{\g}$ contains a single orbit which consists of regular
elements. The singularities of the fibers correspond to irregular
elements. For irregular $e$ the restriction of the adjoint quotient map $\delta_{\g}$ to the slice $s(e)$ has some singular fibers, and $s(e)$ can be regarded as a deformation of these singularities.

In papers \cite{DB,Pr} using Hamiltonian reduction it was shown that the slices $s(e)$ can be naturally equipped with Poisson structures and that these Poisson structures can be quantized. This yields noncommutative deformations of the singularities of the fibers of the adjoint quotient map. These noncommutative deformations called W--algebras of finite type play an important role in the classification of the generalized Gelfand--Graev representations of the Lie algebra $\g$ \cite{Ka,Pr}.

In this paper we are going to outline a similar construction for algebraic groups. Let $G$ be a complex simple algebraic group with Lie algebra $\g$. In case of algebraic groups, instead of the adjoint quotient map, one should consider the conjugation quotient map $\delta_G: G \rightarrow T/W$ generated by the inclusion $\mathbb{C}[T]^W\simeq
\mathbb{C}[G]^G\hookrightarrow \mathbb{C}[G]$, where $T$ is the maximal torus of $G$ corresponding to the Cartan subalgebra $\h$ and $W$ is the Weyl group of the pair $(G, T)$. Some fibers of this map are singular and one can study these singularities by restricting $\delta_G$ to certain transversal slices to conjugacy classes in $G$. These slices were defined in paper \cite{S1}, and they are associated to pairs $(\p,s)$, where $\p$ is a parabolic subalgebra in $\g$ and $s$ is an element of the Weyl group $W$.

To describe the Poisson structure on the slice $s(e)$ by means of Hamiltonian reduction we first recall that the space $\g^*$ dual to $\g$ carries the standard Kirillov-Kostant Poisson bracket. The coadjoint action of the adjoint group $G'$ of $\g$ on $\g^*$ is Hamiltonian, i.e. this action preserves the Poisson bracket on $\g^*$. One can restrict this action to a unipotent subgroup $N\subset G'$ associated to the nilpotent element $e$ as follows.

Let $(e,h,f)$ be an $\mathfrak{sl}_2$--triple associated to $e$, i.e.
elements $f,h\in \g$ obey the following commutation relations $[h,e]=2e$, $[h,f]=-2f$, $[e,f]=h$.
Under the action of ${\rm ad}~h$
we have a decomposition
\begin{equation}\label{gr**}
\g=\oplus_{i\in \mathbb{Z}}\g(i),~{\rm where}~ \g(i)=\{x\in \g
\mid [h,x]=ix\}.
\end{equation}
Denote by $\chi$ the element of
$\g^*$ which corresponds to $e$ under the isomorphism $\g\simeq
\g^*$ induced by the Killing form. The skew--symmetric bilinear form $\omega$ on $\g(-1)$ defined by
$\omega(x,y)=\chi([x,y])$ is nondegenerate. Fix an isotropic Lagrangian
subspace $l$ of $\g(-1)$ with respect to $\omega$.
Let
$$
\n
={l}\oplus \bigoplus_{i\leq -2}\g(i),
$$
and $N$ the Lie subgroup of $G'$ which corresponds to the Lie subalgebra $\n\subset \g$.

The restriction of the coadjoint action of $G'$ to $N$ is still Hamiltonian, and hence the quotient $\g^*/N$ naturally acquires a Poisson structure. We denote by $\mu:\g^* \rightarrow \n^*$ the moment map for this action. Note that the quotient $\g^*/N$ is not smooth, and we understand the Poisson structure on $\g^*/N$ in the sense that the set $C^\infty(\g^*)^N$ of $N$--invariant functions on $\g^*$ is a Poisson subalgebra in $C^\infty(\g^*)$.

In papers \cite{DB,GG,K} it was proved that the reduced Poisson manifold $\mu^{-1}(\chi|_\n)/N$ corresponding to the value $\chi|_\n$ of the moment
map can be identified with $s(e)$ under the isomorphism $\g^*\simeq \g$ induced by the Killing form. This result is known as
the Kostant cross--section theorem. By this theorem the slice $s(e)$ naturally acquires a Poisson structure, and $s(e)$ becomes a Poisson
submanifold of the quotient $\g^*/N$. As a variety the slice $s(e)$ can be identified with $e+\z(f)$, where $\z(f)$ is the centralizer of $f$ in $\g$.

Now we briefly describe the main construction of this paper. As we already mentioned above algebraic group counterparts of the Slodowy slices $s(e)$ were introduced in paper \cite{S1}. We shall equip these transversal slices to conjugacy classes in the group $G$ with some Poisson structures. These slices are of the form $N_sZs^{-1}$ where $s\in G$ is a representative of an element of the Weyl group $W$, $N_s=\{n\in N\mid sns^{-1}\in \overline P\}$, $N$ is the unipotent radical of a parabolic subgroup $P$ of $G$ with Levi factor $L$, $\overline P$ is the opposite parabolic subgroup, and $Z=\{z\in L\mid szs^{-1}=z\}$ is the centralizer of $s$ in $L$. If the action of $s$ on $N$ by conjugations has no fixed points then $N_sZs^{-1}$ is indeed a transversal slice to conjugacy classes in $G$ (for a precise statement see \cite{S1} or Proposition \ref{prop1} below). Moreover, the quotient $NZs^{-1}N/N$ with respect to the action of $N$ on $NZs^{-1}N$ by conjugations is isomorphic to $N_sZs^{-1}$ (see \cite{S1} or Proposition \ref{prop2} below), and hence $N_sZs^{-1}$ is a subvariety of $G/N$.

Now in order to equip the slice  $N_sZs^{-1}$ with a Poisson structure we recall that the action of $G$ on itself by conjugations can be naturally put into the context of Poisson geometry. Namely, equip the group $G$ with the structure of a quasitriangular Poisson-Lie group, i.e. with a Poisson bracket on $G$ such that the product map $G\times G\rightarrow G$ is a Poisson mapping. (A recollection of the results on Poisson--Lie groups and Poisson geometry used in this paper can be found in Section \ref{plg}). If we denote by $(\g,\g^*)$ the tangent Lie bialgebra of $G$ then one can define the dual Poisson--Lie group $G^*$ with the tangent Lie bialgebra $(\g^*,\g)$. As a manifold the dual Poisson--Lie group $G^*$ is isomorphic to a dense open subset in $G$, and there is a Poisson group action, i.e. a Poisson map $G\times G^* \rightarrow G^*$, called the dressing action. If we realize $G^*$ as the dense open subset in $G$ then the dressing action is induced by the action of $G$ on itself by conjugations. Denote by $G_*$ the group $G$ equipped with the Poisson structure induced from $G^*$. In Lemma \ref{radmiss} and Theorem \ref{mainth} it is proved that under some compatibility condition for $N$, $s$ and the Poisson--Lie group structure on $G$ the subgroup $N\subset G$ is admissible, in the sense that $G_*/N$ is naturally equipped with the Poisson structure induced from $G_*$, and that the quotient $N_sZs^{-1}\simeq NZs^{-1}N/N$ is a Poisson submanifold in $G_*/N$. Here as in case of Lie algebras the quotient $G_*/N$ is singular, and the Poisson structure on $G_*/N$ should be understood in the sense that the set $C^\infty(G_*)^N$ of $N$--invariant functions on $G_*$ is a Poisson subalgebra in $C^\infty(G_*)$

The definition of the Poisson structure on the slice $N_sZs^{-1}$ is similar to the definition of the Poisson structure on the Slodowy slice
$s(e)$ discussed above, the variety $NZs^{-1}N$ being a counterpart of the level surface $\mu^{-1}(\chi|_\n)$ of the moment map in case of Lie
algebras. We call the Poisson algebra of regular functions on the slice $N_sZs^{-1}$ a deformed Poisson W--algebra. The deformed Poisson W--algebras
can be naturally quantized in the framework of quantum group theory. This will be explained in a subsequent paper.

In case when $s$ is a representative of a Coxeter element in $W$ and $P$ is a Borel subgroup of $G$ the slice $N_ss^{-1}$ was introduced by R. Steinberg in \cite{St2} (in this case the subgroup $Z$ is trivial) and the Poisson structure on the slice $N_ss^{-1}$ was defined in \cite{STSh}. Actually this Poisson structure is trivial, and the quantization of the Poisson algebra of regular functions on $N_ss^{-1}$ is isomorphic to the center of the quantized algebra of regular functions on the Poisson--Lie group $G^*$ (see \cite{S2}). This fact is of primary importance for classification of the Whittaker representations of the quantum group $U_q(\g)$ \cite{S3}.

Note that the $G$--invariant functions on $G_*$ lie in the center of the Poisson algebra of functions on $G_*$ \cite{dual}. Therefore the intersections of the slices $N_sZs^{-1}$ with the fibers of the conjugation quotient map $\delta_G:G\rightarrow T/W$ are also equipped with Poisson structures. In particular, if such a fiber is singular our construction yields a Poisson structure related to the corresponding singularity. In Section \ref{sl3} we study in detail the case when $s$ is subregular in $G$ and $\p$ is a parabolic subalgebra in $\g$. It is shown in \cite{S1} that in this case the intersections of the slice $N_sZs^{-1}$ with the fibers of the conjugation quotient map may only have simple isolated singularities, and we show that using Theorem \ref{mainth} one can equip the slice $N_sZs^{-1}$ and the coordinate rings of the corresponding simple singularities with Poisson structures.

In the simplest case when $\g={\mathfrak s \mathfrak l}_3$, $s$ is a representative of the longest element of the Weyl group and $\p$ is a Borel subalgebra one can calculate the corresponding Poisson structures explicitly. We show that in this case the singular fiber $\delta_G^{-1}(1)$ of the map $\delta_G: N_sZs^{-1}\rightarrow T/W$ has simple $A_2$--type singularity, and the corresponding Poisson structure on the singular fiber is proportional to that obtained in \cite{Pr} for the singular fiber $\delta_{\g}^{-1}(0)$ of the adjoint quotient map $\delta_{\g}: s(e)\rightarrow \h/W$ in case of a subregular nilpotent element $e\in \g$.

In general the Poisson structures on the slices $N_sZs^{-1}$ are difficult to describe explicitly. In Section \ref{long} we consider the case when $s$ is the reflection with respect to a long root $\beta$ and $\p=\g(0)\oplus \g(1)\oplus \g(2)$ where the components $\g(i)$ are defined by formula (\ref{gr**}) where $e$ is a root vector corresponding to $\beta$. In this case the Poisson structure on the slice $N_sZs^{-1}$ can be described explicitly. An analogous Poisson structure on the slice $s(e)$ for a long root vector $e$ in $\g$ was considered in \cite{Pr1}. The quantization of this Poisson structure is related to the Joseph ideal of the universal enveloping algebra $U(\g)$.

This paper was completed during
my stay in Max--Planck--Institut f\"{u}r Mathematik, Bonn in April--May 2009. I would like to thank Max--Planck--Institut
f\"{u}r Mathematik, Bonn for hospitality.

%%%%%%%%%%%%%%%%%%%%%%%%%%%%%%%%%%%%%%%%%%%%%%%%%%%%%%%%%%%%%%%%%%%%%%%%%%%%%%%%%%%%%%%%%%%%%%%%%%%%%%%%%%%%%%%%%%%%%%%

\section{Transversal slices to conjugacy classes in algebraic
groups} \label{slices1}

In this section following \cite{S1} we recall the definition of the algebraic group counterparts of the
Slodowy slices and an analogue of the Kostant cross--section
theorem for them.

Let $G$ be a complex semisimple (connected) algebraic group, $\g$ its Lie
algebra, $H$ a maximal torus in $G$. Denote by $\h$ the Cartan subalgebra in $\g$ corresponding to $H$. Let $\Delta$ be the root system of the pair $(\g,\h)$. For any root $\alpha\in \Delta$ we denote by $\alpha^\vee\in \h$ the corresponding coroot.

Let $s$ be an element of the Weyl group $W$ of the pair $(\g,\h)$ and $\h_{\mathbb{R}}$ the real form of $\h$, the real linear span of simple coroots in $\h$. The set of roots $\Delta$ is a subset of the dual space $\h_\mathbb{R}^*$.

The Weyl group element $s$ naturally acts on $\h_{\mathbb{R}}$ as an orthogonal transformation with respect to the scalar product induced by the Killing form of $\g$. Using the spectral theory of orthogonal transformations we can decompose $\h_{\mathbb{R}}$ into a direct orthogonal sum of $s$--invariant subspaces,
\begin{equation}\label{hdec}
\h_\mathbb{R}=\bigoplus_{i=0}^{K} \h_i,
\end{equation}
where we assume that $\h_0$ is the linear subspace of $\h_{\mathbb{R}}$ fixed by the action of $s$, and each of the other subspaces $\h_i\subset \h_\mathbb{R}$, $i=1,\ldots, K$, is either two--dimensional or one--dimensional and the Weyl group element $s$ acts on it as rotation with angle $\theta_i$, $0<\theta_i<\pi$ or as the reflection with respect to the origin, respectively. Note that since $s$ has finite order $\theta_i=\frac{2\pi}{m_i}$, $m_i\in \mathbb{N}$.

Since the number of roots in the root system $\Delta$ is finite one can always choose elements $h_i\in \h_i$, $i=0,\ldots, K$, such that $h_i(\alpha)\neq 0$ for any root $\alpha \in \Delta$ which is not orthogonal to the $s$--invariant subspace $\h_i$ with respect to the natural pairing between $\h_{\mathbb{R}}$ and $\h_{\mathbb{R}}^*$.

Now we consider certain $s$--invariant subsets of roots $\overline{\Delta}_i$, $i=0,\ldots, K$, defined as follows
\begin{equation}\label{di}
{\overline{\Delta}}_i=\{ \alpha\in \Delta: h_j(\alpha)=0, j>i,~h_i(\alpha)\neq 0 \},
\end{equation}
where we formally assume that $h_{K+1}=0$.
Note that for some indexes $i$ the subsets ${\overline{\Delta}}_i$ are empty, and that the definition of these subsets depends on the order of terms in direct sum (\ref{hdec}).

We also define other $s$--invariant subsets of roots ${\Delta}_{i_k}$
for all indexes $i_k$, $k=0,\ldots, M$ such that $\overline{\Delta}_{i_k}$ is not empty,
\begin{equation}\label{dik}
{\Delta}_{i_k}=\bigcup_{i_j\leq i_k}\overline{\Delta}_{i_j}.
\end{equation}
For convenience we assume that indexes $i_k$ are labeled in such a way that $i_j<i_k$ if and only if $j<k$.
According to this definition we have a chain of strict inclusions
\begin{equation}\label{inc}
\Delta_{i_M}\supset\Delta_{i_{M-1}}\supset\ldots\supset\Delta_{i_0},
\end{equation}
such that $\Delta_{i_M}=\Delta$, $\Delta_{0}=\{\alpha \in \Delta: s\alpha=\alpha\}$ is the set of roots fixed by the action of $s$, and ${\Delta}_{i_k}\setminus {\Delta}_{i_{k-1}}=\overline{\Delta}_{i_k}$. Observe also that the root system $\Delta$ is the disjoint union of the subsets $\overline{\Delta}_{i_k}$,
$$
\Delta=\bigcup_{k=0}^{M}\overline{\Delta}_{i_k}.
$$

Now assume that
\begin{equation}\label{cond}
|h_{i_k}(\alpha)|>|\sum_{l\leq j<k}h_{i_j}(\alpha)|, ~{\rm for~any}~\alpha\in \overline{\Delta}_{i_k},~k=0,\ldots, M,~l<k.
\end{equation}
Condition (\ref{cond}) can be always fulfilled by suitable rescalings of the elements $h_{i_k}$.

Consider the element
$$
\bar{h}=\sum_{k=0}^{M}h_{i_k}\in \h_\mathbb{R}.
$$
From definition (\ref{di}) of the sets $\overline{\Delta}_i$ we obtain that for $\alpha \in \overline{\Delta}_{i_k}$
\begin{equation}\label{dech}
\bar{h}(\alpha)=\sum_{j\leq k}h_{i_j}(\alpha)=h_{i_k}(\alpha)+\sum_{j< k}h_{i_j}(\alpha)
\end{equation}
Now condition (\ref{cond}), the previous identity and the inequality $|x+y|\geq ||x|-|y||$ imply that for $\alpha \in \overline{\Delta}_{i_k}$ we have
$$
|\bar{h}(\alpha)|\geq ||h_{i_k}(\alpha)|-|\sum_{j< k}h_{i_j}(\alpha)||>0.
$$
Since $\Delta$ is the disjoint union of the subsets $\overline{\Delta}_{i_k}$, $\Delta=\bigcup_{k=0}^{M}\overline{\Delta}_{i_k}$, the last inequality ensures that  $\bar{h}$ belongs to a Weyl chamber of the root system $\Delta$, and one can define the subset of positive roots $\Delta_+$ and the set of simple positive roots $\Gamma$ with respect to that chamber. From condition (\ref{cond}) and formula (\ref{dech}) we also obtain that a root $\alpha \in \overline{\Delta}_{i_k}$ is positive if and only if $h_{i_k}(\alpha)>0$.

To define the algebraic group analogues of the Slodowy slices we shall also need a parabolic subalgebra $\p$ of $\g$ associated to the semisimple element $\bar{h}_0=\sum_{k=0,i_k>0}^{M}h_{i_k}\in \h_\mathbb{R}$ associated to $s\in W$. This subalgebra is defined with the help of the linear eigenspace decomposition of $\g$ with respect to the adjoint action of $\bar{h}_0$ on $\g$, $\g=\bigoplus_{m}(\g)_m$, $(\g)_m=\{ x\in \g \mid [\bar{h}_0,x]=mx\}$, $m \in \mathbb{R}$. By definition $\p=\bigoplus_{m\leq 0}(\g)_m$ is a parabolic subalgebra in $\g$, $\n=\bigoplus_{m<0}(\g)_m$ and $\l=\{x\in \g \mid [\bar{h}_0,x]=0\}$ are the nilradical and the Levi factor of $\p$, respectively. We denote by $P$ the corresponding parabolic subgroup of $G$, by $N$ the unipotent radical of $P$ and by $L$ the Levi factor of $P$. The subgroups of $P$, $N$ and $L$ have Lie algebras $\p$, $\n$ and $\l$, respectively, and both $P$ and $L$ are connected. Note that we have natural inclusions of Lie algebras $\p\supset\b\supset\n$, where $\b$ is the Borel subalgebra of $\g$ corresponding to the system $-\Gamma$ of simple roots, and $\Delta_{0}$ is the root system of the reductive Lie algebra $\l$.

Let $X_\alpha\subset \g$ be the root subspace of $\g$ corresponding to root $\alpha \in \Delta$.
Fix a system of root vectors $e_\alpha\in X_{\alpha}, \alpha \in \Delta$ such that if $[e_{\alpha},e_\beta]=N_{\alpha,\beta}e_{\alpha+\beta}\in X_{\alpha+\beta}$ for any pair $\alpha,\beta\in \Gamma$ of simple positive roots then $[e_{-\alpha},e_{-\beta}]=N_{\alpha,\beta}e_{-\alpha-\beta}\in X_{-\alpha-\beta}$.

Recall that by Theorem 5.4.2. in \cite{GG1} one can uniquely choose a representative $s\in G$ for the Weyl group element $s\in W$ in such a way that the operator ${\rm Ad}s$ sends root vectors $e_{\pm \alpha}\in X_{\pm \alpha}$ to $e_{\pm s\alpha}\in X_{\pm s\alpha}$ for any simple positive root $\alpha \in \Gamma$.
We denote this representative by the same letter, $s\in G$. The representative $s\in G$ is called the normal representative of the Weyl group element $s\in W$. If the order of the Weyl group element $s\in W$ is equal to $R$ then the inner automorphism ${\rm Ad}s$ of the Lie algebra $\g$ has order at most $2R$, ${\rm Ad}s^{2R}={\rm id}$. We also recall that the operator ${\rm Ad}s$ sends each root subspace $X_\alpha\subset \g$, $\alpha \in \Delta$  to $X_{s\alpha}$.

The element $s\in G$ naturally acts on $G$
by conjugations. Let $Z$ be the set of $s$-fixed points in $L$,
\begin{equation}\label{defz}
Z=\{z\in L\mid szs^{-1}=z\},
\end{equation}
and
\begin{equation}\label{defns}
N_s=\{n\in N\mid sns^{-1}\in \overline N\},
\end{equation}
where $\overline N$ is the unipotent radical of the parabolic subgroup $\overline P\subset G$ opposite to $P$. Note that ${\rm dim}~N_s=l(s)$, where $l(s)$ is the length of the Weyl group element $s\in W$ with respect to the system $\Gamma$ of simple roots.
Clearly, $Z$ and $N_s$ are subgroups in $G$, and $Z$ normalizes
both $N$ and $N_s$. Denote by $\n_s$ and $\z$ the Lie algebras of
$N_s$ and $Z$, respectively.

Note that, since the operator ${\rm Ad}s$ sends root vectors $e_{\pm \alpha}\in X_{\pm \alpha}$ to $e_{\pm s\alpha}\in X_{\pm s\alpha}$ for any simple positive root $\alpha \in \Gamma$ and the root system of the reductive Lie algebra $\l$ is fixed by the action of $s$, the semisimple part $\m$ of the Levi subalgebra $\l$ is fixed by the action of ${\rm Ad}s$. In fact in this case $\z=\m\oplus \h_z$ and $\z \cap
\h= \mathbb{C}\h_0$, where $\h_z$ is a Lie subalgebra of the center of $\l$ and $\mathbb{C}\h_0$ is the linear subspace of $\h$ fixed by the action of $s$.

Now consider the subvariety $N_sZs^{-1}\subset G$. The variety $N_sZs^{-1}$ turns out to be transversal to the set of conjugacy classes in $G$. This is also related to
the following statement which is an
analogue of the Kostant cross--section theorem for the subvariety
$N_sZs^{-1}\subset G$.
\begin{proposition}\label{prop2}{\bf (\cite{S1}, Proposition 1)}
Let $s\in W$ be an element of the Weyl group $W$ of the pair $(\g,\h)$.
Let $\overline{h}_0\in \h_\mathbb{R}$ be a semisimple element associated to $s$, $\overline{h}_0= \sum_{k=0,i_k>0}^{M}h_{i_k}$, where elements $h_{i_k}\in \h_{i_k}$ satisfy conditions (\ref{cond}) and $\h_{i_k}\subset \h_\mathbb{R}$ are the subspaces of $\h_\mathbb{R}$ defined in (\ref{hdec}). Let $\n\subset \g$ be the nilradical of the parabolic subalgebra $\p$ defined with the help of $\overline{h}_0$, $\n=\bigoplus_{m<0}(\g)_m$,$(\g)_m=\{ x\in \g \mid [\bar{h}_0,x]=mx\}$, $m \in \mathbb{R}$ and $\l=(\g)_0$ the Levi factor of $\p$. Denote by $P$, $N$ and $L$ the corresponding subgroups of $G$ and by $s\in G$ the normal representative of the Weyl group element $s\in W$. Let $Z$ be the centralizer of $s$ in $L$,
$$
Z=\{z\in L\mid szs^{-1}=z\},
$$
and
$$
N_s=\{n\in N\mid sns^{-1}\in \overline N\},
$$
where $\overline N$ is the unipotent radical of the parabolic subgroup $\overline P\subset G$ opposite to $P$.
Then the conjugation map
\begin{equation}\label{cross}
\alpha: N\times N_sZs^{-1}\rightarrow NZs^{-1}N
\end{equation}
is an isomorphism of varieties.
\end{proposition}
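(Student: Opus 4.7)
The plan is to construct an explicit inverse $\alpha^{-1}:NZs^{-1}N\to N\times N_sZs^{-1}$: given $g\in NZs^{-1}N$, I will produce a unique $n\in N$ with $n^{-1}gn\in N_sZs^{-1}$, and set $\alpha^{-1}(g)=(n,\,n^{-1}gn)$. That $\alpha$ takes values in $NZs^{-1}N$ is immediate from $n(n'zs^{-1})n^{-1}=(nn')\cdot z\cdot s^{-1}\cdot n^{-1}$.

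The decisive structural input is a splitting of $\n$ according to the action of $s$. Let $\n_s$ be the span of those root vectors $e_\alpha\in\n$ for which $s\alpha$ is a root of $\opn$, and let $\widetilde\n$ be the span of the remaining root vectors of $\n$ (those for which $s\alpha$ is a root of $\n$ or of $\l$). Since $\bar h_0$ has a fixed sign on the roots of $\n$ and on the roots of $\opn$, root sums remaining inside these sets stay inside, so both pieces are Lie subalgebras; exponentiating in a height--compatible order yields a variety isomorphism $\widetilde N\times N_s\xrightarrow{\sim} N$, and one verifies that $\mathrm{Ad}(s^{-1})(N_s)\subset\opn$ while $\mathrm{Ad}(s^{-1})(\widetilde N)\subset P$.

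To build $n$, write $g=azs^{-1}b$ with $a,b\in N$ and $z\in Z$. Using $zs^{-1}=s^{-1}z$ (because $Z$ centralizes $s$) and normality of $N$ in $P$, the target equation $n^{-1}gn=n'zs^{-1}$ transforms into an identity inside the product $N\cdot\mathrm{Ad}(s^{-1})(N)$. Splitting $\mathrm{Ad}(s^{-1})(N)$ into its components in $N$, $L$, and $\opn$ and invoking uniqueness in the big--cell Gauss decomposition $N\cdot L\cdot\opn\subset G$, one finds that the $\opn$-- and $L$--components of the right--hand side must vanish and equal $z$, respectively. Via the splitting $N=\widetilde N\cdot N_s$, this uniquely pins down $n$, and the $N_s$--part of the conjugate $n^{-1}gn$ then supplies $n'$.

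The main obstacle is the bookkeeping: verifying that the splitting $N=\widetilde N\cdot N_s$, the Gauss--type decomposition of $\mathrm{Ad}(s^{-1})(N)$, and the resulting implicit formula for $n$ are all morphisms of varieties depending algebraically on $g$. Condition (\ref{cond}) is crucial here, because it guarantees that the chamber and positive system determined by $\bar h$ are compatible with both $s$ and the parabolic $\p$, so that all the Bruhat--like factorizations used above behave algebraically. Bijectivity of $\alpha$ follows from uniqueness of $n$, and the isomorphism conclusion from algebraicity of $\alpha^{-1}$; as a sanity check, the differential $d\alpha_{(1,x)}:(X,Y)\mapsto X-\mathrm{Ad}(x)X+Y$ on $\n\oplus T_x(N_sZs^{-1})$ is an isomorphism, again by virtue of the splitting $\n=\widetilde\n\oplus\n_s$.
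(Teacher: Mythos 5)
First, a caveat: this paper does not actually prove Proposition \ref{prop2} --- it is quoted from \cite{S1} (Proposition 1 there) --- so there is no in-text argument to compare yours against. Judged on its own, your outline has the right skeleton: the decomposition $N=\widetilde N\cdot N_s$ dictated by whether $s$ sends a root of $\n$ into $\n$ or into $\opn$, followed by uniqueness of the Gauss factorization in the big cell $N\cdot L\cdot\overline N$, is indeed the standard route to this statement. But two things go wrong. The smaller one: several of your auxiliary inclusions point the wrong way. From the definition $N_s=\{n\in N\mid sns^{-1}\in \overline N\}$ one gets ${\rm Ad}(s)(N_s)\subset\overline N$ and ${\rm Ad}(s)(\widetilde N)\subset N$, not ${\rm Ad}(s^{-1})$; and ${\rm Ad}(s^{-1})(\widetilde N)\subset P$ is false in general, since $\bar h_0(\alpha)<0$ and $\bar h_0(s\alpha)<0$ do not force $\bar h_0(s^{-1}\alpha)\le 0$ (think of $s$ acting as a rotation by $2\pi/3$ on $\h_{i_k}$). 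Since the element you must factor is $s^{-1}n^{-1}s$, what actually enters is the decomposition $s^{-1}Ns=(s^{-1}Ns\cap N)\cdot(s^{-1}Ns\cap\overline N)$, whose factors are the ${\rm Ad}(s^{-1})$-images of the ${\rm Ad}(s)$-defined pieces rather than $\widetilde N$ and $N_s$ themselves; getting these identifications straight is not cosmetic.

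The more serious problem is that the step ``this uniquely pins down $n$'' is where the whole content of the proposition lives, and you do not carry it out. After the big cell forces the $L$-component to equal $z$ and fixes the $\overline N\cap s^{-1}Ns$-component of $s^{-1}n^{-1}s$, you are left with an equation of the shape $E\,n'\,\sigma(E)=F$, with $E$ ranging over $sNs^{-1}\cap N$, $n'\in N_s$, and $\sigma(E)={\rm Ad}(zs^{-1})(E^{-1})$ landing in $N\cap s^{-1}Ns$. Proving that this has a unique solution $(E,n')$ depending polynomially on $F$ is a genuine twisted-conjugation problem on the unipotent group $N$: one argues by induction along the lower central series of $N$ and must verify at each stage that the linearized map $(E,n')\mapsto E+{\rm Ad}(zs^{-1})E+n'$ is a bijection onto the corresponding graded piece of $\n$; the dimension count $\dim(sNs^{-1}\cap N)+\dim N_s=\dim N$ is only the first step of that verification. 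Dismissing this as ``bookkeeping'' leaves the existence, injectivity and algebraicity claims unproved, and your closing remark about $d\alpha_{(1,x)}$ is merely the linearization of the same unproved assertion at one point, so it cannot substitute for it.
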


\begin{remark}
The particular choice of the normal representative $s\in G$ for a Weyl group element $s\in W$ in the previous proposition is technically convenient but actually not important. Actually any representative of $s\in W$ in the normalizer of $\h$ in $G$ is $H$--conjugate to an element from $Zs$, where $s\in G$ is the normal representative of $s\in W$.
\end{remark}

\begin{proposition}\label{prop1}{\bf (\cite{S1}, Proposition 2)}
Under the conditions of Proposition \ref{prop2} the variety $N_sZs^{-1}\subset G$ is a transversal
slice to the set of conjugacy classes in $G$.
\end{proposition}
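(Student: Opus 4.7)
My plan is to verify the infinitesimal transversality condition directly: for every $y\in N_sZs^{-1}$,
\begin{equation*}
T_yG \;=\; T_y(G\cdot y) \;+\; T_y(N_sZs^{-1}),
\end{equation*}
where $G\cdot y$ denotes the conjugacy class of $y$. The first move is to invoke Proposition~\ref{prop2}. Since the map $\alpha\colon N\times N_sZs^{-1}\to NZs^{-1}N$ is an isomorphism of varieties, its differential at $(e,y)$ shows that $T_y(NZs^{-1}N)$ decomposes as $[\n,y]_{\mathrm{conj}} + T_y(N_sZs^{-1})$, where $[\n,y]_{\mathrm{conj}}$ is the tangent space at $y$ to the $N$-conjugation orbit through $y$. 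Because this orbit sits inside $G\cdot y$, the inclusion $[\n,y]_{\mathrm{conj}}\subseteq T_y(G\cdot y)$ reduces the problem to the stronger statement
\begin{equation*}
T_yG \;=\; T_y(G\cdot y) \;+\; T_y(NZs^{-1}N).
\end{equation*}

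Translating via left multiplication by $y^{-1}$ and setting $V:=L_{y^{-1}*}T_y(NZs^{-1}N)\subset\g$, the goal becomes the purely Lie-algebraic identity
\begin{equation*}
\g \;=\; (1-\mathrm{Ad}(y^{-1}))\g \;+\; V.
\end{equation*}
Parametrising $NZs^{-1}N$ by the multiplication map $N\times Zs^{-1}\times N\to G$ and exploiting that $\mathrm{Ad}(z)$ normalises $\n$ and that $\mathrm{Ad}(s)\z=\z$, a short direct computation at $y=n_szs^{-1}$ yields
\begin{equation*}
V \;\supseteq\; \n \;+\; \mathrm{Ad}(s)\n \;+\; \z.
\end{equation*}

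To finish I would use the decomposition $\g=\opn\oplus\z\oplus\l^{\perp}\oplus\n$, where $\l^{\perp}$ is an $\mathrm{Ad}(s)$-stable complement of $\z$ in $\l$. Since $\n$ and $\z$ already lie in $V$, it remains to cover $\opn\oplus\l^{\perp}$ by $(1-\mathrm{Ad}(y^{-1}))\g+V$. On $\l^{\perp}$ the operator $1-\mathrm{Ad}(s)$ is invertible, because $\z$ is by definition the full $\mathrm{Ad}(s)$-fixed subspace of $\l$. On $\opn$ I would argue by induction on the $\mathrm{ad}\,\bar h_0$-filtration: writing $\mathrm{Ad}(y^{-1})=\mathrm{Ad}(s)\mathrm{Ad}(z^{-1}n_s^{-1})$, the unipotent factor contributes only operators that strictly lower the $\mathrm{ad}\,\bar h_0$-weight, so in the associated graded the relevant operator is $1-\mathrm{Ad}(s)$; and since the $s$-fixed roots of $\Delta$ all lie in $\Delta_{i_0}$, the root system of $\l$, no root space of $\opn$ is $s$-fixed, which, together with $\mathrm{Ad}(s)\n\subseteq V$, allows one to invert step by step.

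The main obstacle is exactly this last inductive step. The operator $\mathrm{Ad}(s)$ does not preserve $\opn$, so one has to keep track of how it permutes root spaces among $\opn$, $\l$ and $\n$ modulo $V$. Condition~(\ref{cond}) on $\bar h_0$ is designed precisely so that $\bar h$ sits in the interior of a Weyl chamber and no root outside $\Delta_{i_0}$ is $s$-fixed; combined with the inclusion $\mathrm{Ad}(s)\n\subseteq V$, this is what lets the induction close and delivers the transversality.
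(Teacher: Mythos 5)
The paper itself contains no proof of this proposition --- it is quoted verbatim from \cite{S1}, Proposition 2 --- so your argument can only be judged on its own terms. Your reduction is sound: the differential of the isomorphism $\alpha$ of Proposition \ref{prop2} at $(1,y)$, $y=n_szs^{-1}$, does give $T_y(NZs^{-1}N)=\{\xi y-y\xi:\xi\in\n\}\oplus T_y(N_sZs^{-1})$; the left-translated tangent space $V$ is precisely $\n+\z+{\rm Ad}(s)\n$; and the pieces $\n$, $\z$ and $\l^{\perp}$ are handled correctly (for $\l^{\perp}$ you should note that it is central in $\l$, so the factors ${\rm Ad}(z^{-1})$ and ${\rm Ad}(n_s^{-1})$ hidden inside ${\rm Ad}(y^{-1})$ only contribute terms lying in ${\rm Ad}(s)\n\subseteq V$, and the surviving operator really is $1-{\rm Ad}(s)$).

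The gap is exactly where you suspect it is, and it is genuine. The operator ${\rm Ad}(s)$ does not respect the ${\rm ad}\,\bar{h}_0$-filtration: since $s\bar{h}_0\neq\bar{h}_0$, it sends a root space $X_\alpha\subseteq(\g)_m$ to $X_{s\alpha}\subseteq(\g)_{\bar{h}_0(s\alpha)}$, and $\bar{h}_0(s\alpha)$ may well exceed $m$. Hence there is no induced operator ``$1-{\rm Ad}(s)$ on the associated graded'': writing ${\rm Ad}(y^{-1})={\rm Ad}(s){\rm Ad}(z^{-1}n_s^{-1})$, the lower-degree error terms produced by the unipotent factor are scattered by ${\rm Ad}(s)$ into arbitrary, including higher, degrees, and the induction on $\opn$ does not close. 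The repair is to dualize rather than to induct. With respect to the Killing form, $\g=(1-{\rm Ad}(y^{-1}))\g+V$ is equivalent to $\g^y\cap\p\cap{\rm Ad}(s)\p\cap\z^{\perp}=0$, where $\g^y$ denotes the centralizer of $y$ in $\g$. For $x$ in this intersection the relation ${\rm Ad}(s^{-1})x={\rm Ad}(z^{-1}n_s^{-1})x$ forces the $\l$-component $x_{\l}$ to be fixed by ${\rm Ad}(z^{-1}s)|_{\l}$; that fixed space is contained in $\z$ (it is the centralizer of $z$ in $\m$ plus the $s$-fixed part of the centre of $\l$), and the Killing form is nondegenerate on $\z$, so $x_{\l}\perp\z$ gives $x_{\l}=0$ and $x\in\g^y\cap\n$. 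Finally $\g^y\cap\n=0$ is nothing but the injectivity of $d\alpha_{(1,y)}$ on $\n\times\{0\}$ --- a consequence of Proposition \ref{prop2} which you already invoke for the direct-sum decomposition but never exploit. With that substitution your outline becomes a complete proof; condition (\ref{cond}) enters only through the fact that the roots of $\l$ are exactly the $s$-fixed roots, so that ${\rm Ad}(s)\n$ has zero component in $\l$.
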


%%%%%%%%%%%%%%%%%%%%%%%%%%%%%%%%%%%%%%%%%%%%%%%%%%%%%%%%%%%%%%%%%%%%%%%%%%%%%%%%%%%%%%%%%%%%%%%%%%%%%%%%

\section{Poisson--Lie groups and Poisson reduction}
\label{plg}

\setcounter{equation}{0}

In this section we recall some results related to Poisson--Lie groups and Poisson geometry (see
\cite{ChP,Dm,fact,dual}). These results will be used in the next section to
equip the slices $N_sZs^{-1}$ defined in Section \ref{slices1} with Poisson structures.

Let $G$ be a
finite--dimensional Lie group equipped with a Poisson bracket,
$\frak g$ its Lie algebra. $G$ is called a Poisson--Lie group if
the multiplication $G\times G \rightarrow G$ is a Poisson map. A
Poisson bracket satisfying this axiom is degenerate and, in
particular, is identically zero at the unit element of the group.
Linearizing this bracket at the unit element defines the structure
of a Lie algebra in the space $T^*_eG\simeq {\frak g}^*$. The pair
(${\frak g},{\frak g}^{*})$ is called the tangent bialgebra of
$G$.

Lie brackets in $\frak{g}$ and $\frak{g}^{*}$ satisfy the
following compatibility condition:

{\em Let }$\delta: {\frak g}\rightarrow {\frak g}\wedge {\frak g}$
{\em be the dual  of the commutator map } $[,]_{*}: {\frak
g}^{*}\wedge {\frak g}^{*}\rightarrow {\frak g}^{*}$. {\em Then }
$\delta$ {\em is a 1-cocycle on} $  {\frak g}$ {\em (with respect
to the adjoint action of } $\frak g$ {\em on} ${\frak
g}\wedge{\frak g}$).

Let $c_{ij}^{k}$ and $f^{ab}_{c}$ be the structure constants of ${\frak
g}$ and ${\frak g}^{*}$, respectively, with respect to the dual bases $\{e_{i}\}$ and
$\{e^{i}\}$ in ${\frak g}$ and ${\frak g}^{*}$. The compatibility
condition means that

$$
c_{ab}^{s} f^{ik}_{s} ~-~ c_{as}^{i} f^{sk}_{b} ~+~ c_{as}^{k}
f^{si}_{b} ~-~ c_{bs}^{k} f^{si}_{a} ~+~ c_{bs}^{i} f^{sk}_{a} ~~=
~~0.
$$
This condition is symmetric with respect to exchange of $c$ and
$f$. Thus if $({\frak g},{\frak g}^{*})$ is a Lie bialgebra, then
$({\frak g}^{*}, {\frak g})$ is also a Lie bialgebra.

The following proposition shows that the category of
finite--dimensional Lie bialgebras is isomorphic to the category
of finite--dimensional connected simply connected Poisson--Lie
groups.
\begin{proposition}{\bf (\cite{ChP}, Theorem 1.3.2)}
If $G$ is a connected simply connected finite--dimensional Lie
group, every bialgebra structure on $\frak g$ is the tangent
bialgebra of a unique Poisson structure on $G$ which makes $G$
into a Poisson--Lie group.
\end{proposition}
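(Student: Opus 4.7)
The strategy is to integrate the infinitesimal bialgebra datum $\delta: \mathfrak{g}\to\mathfrak{g}\wedge\mathfrak{g}$ to a global multiplicative Poisson bivector $\pi$ on $G$, and then check it is Poisson. Extend the adjoint action of $\mathfrak{g}$ on itself to the diagonal action of $\mathfrak{g}$ on $\mathfrak{g}\wedge\mathfrak{g}$. The compatibility displayed in the excerpt says exactly that $\delta$ is a $1$-cocycle on $\mathfrak{g}$ with values in this module. Since $G$ is connected and simply connected, the standard integration theorem for Lie-algebra cocycles produces a unique group $1$-cocycle $D: G \to \mathfrak{g}\wedge\mathfrak{g}$ with $d_eD=\delta$, characterized by
\begin{equation*}
D(gh) = D(g) + (\mathrm{Ad}\, g\wedge \mathrm{Ad}\, g)\,D(h),\qquad D(e)=0.
\end{equation*}

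Next I would produce the candidate tensor by right-translation: set $\pi_g = (R_g)_\ast D(g)\in \Lambda^2 T_gG$, where we regard $D(g)\in \mathfrak{g}\wedge\mathfrak{g}\simeq \Lambda^2 T_eG$. A short direct computation shows that the cocycle identity for $D$ is equivalent to the multiplicativity identity
\begin{equation*}
\pi_{gh} = (L_g)_\ast \pi_h + (R_h)_\ast \pi_g,
\end{equation*}
which is in turn equivalent to the multiplication $G\times G\to G$ being a Poisson map. Thus the Poisson--Lie axiom will be automatic as soon as we know $\pi$ is Poisson.

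The main technical task is therefore to show that the Schouten square $[\pi,\pi]_S$ vanishes. The key observation is that multiplicativity of $\pi$ forces $[\pi,\pi]_S$ to itself be a multiplicative trivector field on $G$; as such it is determined by its intrinsic derivative at $e$, viewed as a map $\mathfrak{g}\to\Lambda^3\mathfrak{g}$. Computing this derivative, one finds it equals, up to a universal sign, the dual of the Jacobiator of the bracket $[\,,\,]_\ast$ on $\mathfrak{g}^\ast$. Since by hypothesis $(\mathfrak{g}^\ast,[\,,\,]_\ast)$ is a Lie algebra, its Jacobiator is zero, and hence $[\pi,\pi]_S\equiv 0$ on $G$. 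This identification of the linearization of $[\pi,\pi]_S$ with the co-Jacobi obstruction, and the reduction of its global vanishing to infinitesimal vanishing at $e$, is the main obstacle; the cleanest bookkeeping proceeds by working consistently with right-invariant frames and the intrinsic derivative of multiplicative tensor fields.

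Uniqueness is then immediate. Any Poisson--Lie structure $\pi'$ on $G$ with tangent bialgebra $(\mathfrak{g},\mathfrak{g}^\ast)$ has intrinsic derivative $\delta$ at $e$; the associated cocycle $D': G\to \mathfrak{g}\wedge\mathfrak{g}$ defined by $D'(g)=(R_{g^{-1}})_\ast \pi'_g$ integrates $\delta$, so by the uniqueness clause of the integration step $D'=D$, and hence $\pi'=\pi$.
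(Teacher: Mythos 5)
The paper offers no proof of this statement---it is quoted verbatim from Chari--Pressley (Theorem 1.3.2), where the argument is exactly the one you outline (Drinfeld's integration of the cocycle $\delta$ to a multiplicative bivector, vanishing of the Schouten square via its linearization at the identity being the co-Jacobiator, and uniqueness from uniqueness of cocycle integration on a connected, simply connected group). Your proposal is correct and follows essentially the same route as the cited source.
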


Let $G$ be a finite--dimensional Poisson--Lie group, $({\frak
g},{\frak g}^{*})$ the tangent bialgebra of $G$. The connected
simply connected finite--dimensional Poisson--Lie group
corresponding to the Lie bialgebra $({\frak g}^{*}, {\frak g})$ is
called the dual Poisson--Lie group and denoted by $G^*$.

In this paper we shall need a special class of factorizable Lie bialgebras.
This class  is slightly smaller than the class of quasitriangular Lie bialgebras.
A Lie bialgebra $({\frak g},{\frak g}^{*})$ is called a {\em factorizable } if the following conditions are satisfied (see
\cite{Dm,fact}):
\begin{enumerate}
\item
${\frak g}${\em \ is equipped with a non--degenerate invariant
scalar product} $\left\langle \cdot ,\cdot \right\rangle$.

We shall always identify ${\frak g}^{*}$ and ${\frak g}$ by means
of this scalar product.

\item  {\em The dual Lie bracket on }${\frak g}^{*}\simeq {\frak g}${\em \
is given by}
\begin{equation}
\left[ X,Y\right] _{*}=\frac 12\left( \left[ rX,Y\right] +\left[
X,rY\right] \right) ,X,Y\in {\frak g},  \label{rbr}
\end{equation}
{\em where }$r\in {\rm End}\ {\frak g}${\em \ is a skew symmetric,
with respect to the  non--degenerate invariant scalar product,
linear operator.}

\item  $r${\em \ satisfies} {\em the} {\em modified classical Yang-Baxter
equation:}
\begin{equation}
\left[ rX,rY\right] -r\left( \left[ rX,Y\right] +\left[
X,rY\right] \right) =-\left[ X,Y\right] ,\;X,Y\in {\frak g}{\bf .}
\label{cybe}
\end{equation}
\end{enumerate}

The skew--symmetric operator $r$ on $\g$ satisfying the modified classical Yang--Baxter equation is called the classical r--matrix (or simply r--matrix).

Define operators $r_\pm \in {\rm End}\ {\frak g}$ by
\[
r_{\pm }=\frac 12\left( r\pm id\right) .
\]
We shall need some properties of the operators $r_{\pm }$. Denote
by ${\frak b}_\pm$ and ${\frak n}_\mp$ the image and the kernel of
the operator $r_\pm $:
\begin{equation}\label{bnpm}
{\frak b}_\pm = Im~r_\pm,~~{\frak n}_\mp = Ker~r_\pm.
\end{equation}

The classical Yang--Baxter equation implies that $r_{\pm }$ ,
regarded as a mapping from ${\frak g}^{*}$ into ${\frak g}$, is a
Lie algebra homomorphism. Moreover, $r_{+}^{*}=-r_{-},$\ and
$r_{+}-r_{-}=id.$

If the tangent Lie bialgebra of a Poisson--Lie group $G$ is factorizable then one can describe the dual group $G^*$ in terms of $G$ as follows.
Put ${\frak {d}}={\frak g + {\g}}$ (direct sum of two copies).
The mapping
\begin{eqnarray}\label{imbd}
{\frak {g}}^{*}\rightarrow {\frak {d}}~~~:X\mapsto
(X_{+},~X_{-}),~~~X_{\pm }~=~r_{\pm }X
\end{eqnarray}
is a Lie algebra embedding. Thus we may identify ${\frak g^{*}}$
with a Lie subalgebra in ${\frak {d}}$.

Naturally, embedding (\ref{imbd}) extends to a homomorphism
$$
G^*\rightarrow G\times G,~~L\mapsto (L_+,L_-).
$$
We shall identify $G^*$ with the corresponding subgroup in
$G\times G$.

Now we explicitly describe Poisson structures on the Poisson--Lie
group $G$ and on its dual group $G^*$.

For every group $A$ with Lie algebra $\frak a$ and any function
$\varphi \in C^\infty (A)$ we define left and right gradients $\nabla
\varphi , \nabla^{\prime} \varphi$, which are $C^\infty$-functions on $A$ with values in  ${\frak a}^{*}$, by the formulae
\begin{eqnarray}
\xi ( \nabla \varphi (x))\ =\left( \frac d{dt}\right) _{t=0}\varphi (e^{t\xi }x),  \nonumber \\
\xi ( \nabla^{\prime} \varphi (x))=\left( \frac d{dt}\right)
_{t=0}\varphi (xe^{t\xi }),~~\xi \in {\frak {a}.}  \label{grad}
\end{eqnarray}
The canonical Poisson bracket on Poisson--Lie group $G$ with
factorizable tangent bialgebra $(\g, \g^*)$ has the form:
\begin{equation}
\{ \varphi ,\psi \}~~=~~\frac 12 \left\langle r \nabla
\varphi,\nabla \psi \right\rangle -~\frac 12\left\langle
r\nabla^{\prime} \varphi , \nabla^{\prime} \psi \right\rangle,
\label{pbr}
\end{equation}
where $r$ is the corresponding r--matrix.

The canonical Poisson bracket on the dual Poisson--Lie group $G^*$
can be described in terms of the original group $G$ and the
classical r--matrix $r$.
\begin{proposition}
Denote by $G_*$ the group $G$ equipped with the following Poisson
bracket
\begin{equation}
\left\{ \varphi ,\psi \right\} _* =\left\langle r \nabla
\varphi,\nabla \psi \right\rangle +\left\langle r \nabla^{\prime
}\varphi,\nabla^{\prime }\psi\right\rangle -2\left\langle r_{+}
\nabla^{\prime }\varphi,\nabla \psi\right\rangle -2\left\langle
r_{-} \nabla\varphi,\nabla^{\prime }\psi\right\rangle ,
\label{tau}
\end{equation}
where all the gradients are taken with respect to the original
group structure on $G$.

Then the map $q:G^* \rightarrow G_*$ defined by
\begin{equation}\label{q*}
q(L_+,L_-)=L_+L_-^{-1}
\end{equation}
is a Poisson mapping and the image of $q$ is a dense open subset
in $G_*$.
\end{proposition}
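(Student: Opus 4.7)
The strategy is to pull back the bracket $\{\cdot,\cdot\}_*$ from $G_*$ along $q$ and identify the result with the canonical Poisson structure on $G^*\subset G\times G$.

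First I would recall the standard description of the Poisson structure on $G^*$ for a factorizable bialgebra: under the embedding (\ref{imbd}), $G^*$ is a Poisson subgroup of $G\times G$ equipped with the sum of two copies of the Sklyanin bracket (\ref{pbr}). Thus, for functions $F,H$ on $G\times G$ with left gradients $(\nabla_1 F,\nabla_2 F)$ and right gradients $(\nabla'_1 F,\nabla'_2 F)$, the bracket on $G^*$ at a point $L=(L_+,L_-)$ is
\[
\{F,H\}_{G^*}=\tfrac{1}{2}\sum_{i=1,2}\bigl(\langle r\nabla_i F,\nabla_i H\rangle-\langle r\nabla'_i F,\nabla'_i H\rangle\bigr).
\]

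Next, for $\varphi\in C^\infty(G_*)$ I would compute the left and right gradients of $\tilde\varphi=\varphi\circ q$ at $L=(L_+,L_-)$. Differentiating the identity
\[
q(e^{t\xi_+}L_+,\,e^{t\xi_-}L_-)=e^{t\xi_+}L_+L_-^{-1}e^{-t\xi_-}
\]
at $t=0$, and the analogous identity for right translations, and writing $g=L_+L_-^{-1}$, one obtains expressions of the form
\[
\nabla_1\tilde\varphi=\nabla\varphi(g),\qquad \nabla_2\tilde\varphi=-\mathrm{Ad}^*_{L_-}\nabla'\varphi(g),
\]
and analogous formulas for $\nabla'_1\tilde\varphi,\nabla'_2\tilde\varphi$, each expressing a gradient on $G\times G$ as an $\mathrm{Ad}^*$--twist of $\nabla\varphi(g)$ or $\nabla'\varphi(g)$. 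Substituting these into the product Sklyanin bracket produces four groups of terms. Using $\mathrm{Ad}$--invariance of $\langle\cdot,\cdot\rangle$, the relation $r_+-r_-=\mathrm{id}$, and $r_+^{*}=-r_-$, the diagonal terms $\tfrac12\langle r\nabla_i\tilde\varphi,\nabla_i\tilde\psi\rangle$ combine into $\langle r\nabla\varphi,\nabla\psi\rangle+\langle r\nabla'\varphi,\nabla'\psi\rangle$, while the cross terms coming from the second factor collapse onto $-2\langle r_+\nabla'\varphi,\nabla\psi\rangle-2\langle r_-\nabla\varphi,\nabla'\psi\rangle$, reproducing exactly (\ref{tau}).

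Finally, the statement about the image is a general-position argument: the image of $q$ consists of elements of $G$ admitting a ``generalized Gauss decomposition'' $g=L_+L_-^{-1}$ with $L_\pm$ lying in the subgroups corresponding to ${\frak b}_\pm$ of (\ref{bnpm}), and the big cell of this decomposition is open and dense in $G$.

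The main technical obstacle is the bookkeeping in Step 2--3: one must track the $\mathrm{Ad}^*$--twists produced by identifying left and right gradients of $\tilde\varphi$ on $G\times G$ with those of $\varphi$ on $G_*$ at $g=L_+L_-^{-1}$, and verify that, after using $r_++r_-=2r$ and $r_+-r_-=\mathrm{id}$, the $\mathrm{Ad}$--factors disappear from the final formula. The rest is routine in the factorizable Poisson--Lie formalism.
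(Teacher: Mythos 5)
First, a point of reference: the paper does not actually prove this proposition --- it is stated in Section \ref{plg} purely as a recollection from the literature (\cite{fact,dual,RIMS}) --- so the only meaningful comparison is with the standard proof in those sources, which is indeed the computation you are attempting: realize $G^*$ inside $G\times G$, push the bracket forward along $q$, and identify the result with (\ref{tau}). Your overall plan is therefore the right one, and your treatment of the density statement (the image of $q$ lies in the big cell, and $dq$ at the identity equals $r_+-r_-=\mathrm{id}$) is acceptable as a sketch.

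There is, however, a genuine error at the foundation of the main computation: the canonical Poisson structure on $G^*\subset G\times G$ is \emph{not} the restriction of the product of two copies of the Sklyanin bracket (\ref{pbr}). It is the restriction of the Drinfeld double structure, i.e.\ the Sklyanin--type bracket on $G\times G$ built from the $r$--matrix $r_{\mathfrak d}=P_{\g^\delta}-P_{\g^*}$ of the Manin triple $\bigl(\g\oplus\g,\ \g^\delta,\ \{(r_+X,r_-X)\}\bigr)$ together with the indefinite invariant form $\langle\cdot,\cdot\rangle\oplus(-\langle\cdot,\cdot\rangle)$. This $r_{\mathfrak d}$ is not block--diagonal: one computes $r_{\mathfrak d}(X_1,X_2)=(-rX_1+2r_+X_2,\,-2r_-X_1+rX_2)$, and it is exactly the off--diagonal blocks $2r_\pm$ that generate the cross terms $-2\langle r_+\nabla'\varphi,\nabla\psi\rangle-2\langle r_-\nabla\varphi,\nabla'\psi\rangle$ in (\ref{tau}). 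With the product structure you start from, no such cross terms can appear --- a block--diagonal tensor evaluated on the pulled--back gradients only ever produces terms $\langle r\,\cdot,\cdot\rangle$ within each slot --- and, worse, the resulting expression contains $\langle r\,\mathrm{Ad}_{L_-^{-1}}\nabla'\varphi,\mathrm{Ad}_{L_-^{-1}}\nabla'\psi\rangle$, which depends on the factorization $(L_+,L_-)$ and not only on $g=L_+L_-^{-1}$, so the bracket would not even descend along $q$. Your gradient formulas also need correction: the left gradients of $\tilde\varphi=\varphi\circ q$ form the untwisted pair $(\nabla\varphi(g),-\nabla'\varphi(g))$, while the $\mathrm{Ad}_{L_-^{-1}}$--twists occur only in the right gradients, which are $\mathrm{Ad}_{L_-^{-1}}(\nabla'\varphi(g),-\nabla'\varphi(g))$. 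Once the correct double structure is used, the computation is in fact cleaner than you anticipate: after the sign flip coming from the indefinite form, the twisted right gradients lie in the isotropic diagonal, so the right--gradient contribution vanishes identically, and the left--gradient contribution alone reproduces (\ref{tau}) up to an overall normalization fixed by one's convention for $r_{\mathfrak d}$.
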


Now we recall some facts on Poisson reduction and Poisson group actions.
A Poisson group action of a Poisson--Lie group $A$ on
a Poisson manifold $M$ is a group action $A\times M\rightarrow M$
which is also a Poisson map (as usual, we suppose that $A\times M$
is equipped with the product Poisson structure). In \cite{RIMS} it
is proved that if the space $M/A$ is a smooth manifold, there
exists a unique Poisson structure on $M/A$ such that the canonical
projection $M\rightarrow M/A$ is a Poisson map.

The main example of Poisson group actions is the so--called
dressing action. The dressing action can be described as follows
(see \cite{Lu,RIMS}).
\begin{proposition}\label{dressingact}
Let $G$ be a Poisson--Lie group with
factorizable tangent Lie bialgebra, $G^*$ the dual group. Then
there exists a unique left Poisson group action
$$
G\times G^*\rightarrow G^*,~~(g,(L_+,L_-))\mapsto g\circ
(L_+,L_-)
$$
such that if  $q:G^* \rightarrow G_*$ is the map defined by formula
(\ref{q*}) then
$$
q(g\circ (L_+,L_-))=gL_-L_+^{-1}g^{-1},
$$
i.e. the conjugation map $G\times G_* \rightarrow G_*$ is a Poisson
group action of the Poisson--Lie group $G$ on the Poisson manifold
$G_*$.
\end{proposition}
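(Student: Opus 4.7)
The construction proceeds through the factorization problem in the Drinfeld double $D = G \times G$. Equip $D$ with its natural Poisson--Lie structure coming from the bialgebra $(\g \oplus \g^*, \g^* \oplus \g)$. The diagonal $\delta : G \hookrightarrow D$, $g \mapsto (g,g)$, and the embedding (\ref{imbd}) realize $G$ and $G^*$ as Poisson--Lie subgroups of $D$, and the identity $r_+ - r_- = {\rm id}$ gives the vector space decomposition ${\rm Lie}(D) = \g^* \oplus \delta(\g)$. Consequently, elements of $D$ in a neighbourhood of the identity admit a unique factorization $d = L \cdot \delta(g)$ with $L \in G^*$, $g \in G$.

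First I would define the action by factorization: for $g \in G$ and $L = (L_+, L_-) \in G^*$, factor
$\delta(g) \cdot L = L' \cdot \delta(g')$ with $L' \in G^*$, $g' \in G$, and set $g \circ L := L'$. Uniqueness of the factorization makes this a left action. Writing out $gL_\pm = L'_\pm g'$ componentwise produces $L'_+ (L'_-)^{-1} = g L_+ L_-^{-1} g^{-1}$, which is the required compatibility with $q$ up to the sign/ordering convention chosen for $q$ in formula (\ref{q*}). Uniqueness of the action satisfying this compatibility is automatic, since $q$ has dense image in $G_*$ and the conjugation formula determines $g \circ L$ on that dense set.

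The main obstacle is verifying that this dressing action is Poisson. The cleanest route is to invoke that multiplication on $D$ is a Poisson map (as $D$ is Poisson--Lie) and that the local factorization $D \supset U \to G^* \times G$ is a Poisson diffeomorphism onto its image; the action, being the composition of these two maps followed by projection to the $G^*$ factor, is then automatically Poisson. A more hands--on alternative is to establish the Poisson property of the conjugation map $G \times G_* \to G_*$ directly from the explicit bracket (\ref{tau}): using $r_+ - r_- = {\rm id}$ and $r_+^* = -r_-$, together with the decomposition of the left and right gradients of a function $\varphi(g L g^{-1})$ into pieces tangent to the $G$-- and the $L$--factors, one matches the pullback of (\ref{tau}) under conjugation with the product of the brackets (\ref{pbr}) on $G$ and (\ref{tau}) on $G_*$. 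Either route then transfers through the Poisson map $q$ to yield the desired Poisson action on $G^*$, which is precisely what the proposition asserts.
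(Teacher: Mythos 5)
The paper does not actually prove this proposition: it is quoted as a known result with a pointer to \cite{Lu} and \cite{RIMS}, so there is no internal argument to compare against. Your outline is, in substance, the standard proof from those references (especially \cite{RIMS}): realize $G$ diagonally and $G^*$ via (\ref{imbd}) inside the double $D=G\times G$, use $r_+-r_-={\rm id}$ to get the complementary decomposition of ${\rm Lie}(D)$, define the dressing action by factorization, and check compatibility with $q$ componentwise. Two points deserve more care than you give them. First, the assertion that the local factorization $D\supset U\to G^*\times G$ is a Poisson diffeomorphism is the crux and is not automatic from ``$D$ is Poisson--Lie'': it holds only for the correct (Drinfeld) Poisson structure on the double, with respect to which $G_{\rm diag}$ and $G^*$ embed as Poisson submanifolds with definite signs; this is precisely the content of the main theorem of \cite{RIMS}, and without fixing that structure the claim that the action is Poisson for the \emph{product} structure on $G\times G^*$ does not follow. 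Second, your componentwise computation yields $q(g\circ L)=gL_+L_-^{-1}g^{-1}=g\,q(L)\,g^{-1}$, whereas the statement reads $gL_-L_+^{-1}g^{-1}$; you are right to flag this as a convention issue (as written, with $q(L)=L_+L_-^{-1}$, the displayed formula is not even compatible with the left-action axiom, so it must be read with the opposite ordering convention for $q$). Your uniqueness argument via density of the image of $q$ also needs the remark that $q$ is only a local diffeomorphism, so one should invoke continuity in $g$ and connectedness of $G$ to pin down $g\circ L$; with these caveats the proposal is a correct reconstruction of the proof the paper delegates to the literature.
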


The notion of Poisson group actions may be generalized as follows.
Let $A\times M \rightarrow M$ be a Poisson group action of a
Poisson--Lie group $A$ on a Poisson manifold $M$.
A subgroup $K\subset A$ is called { admissible} if the set $%
C^\infty \left( M\right) ^K$ of $K$-invariants is a Poisson subalgebra in $%
C^\infty \left( M\right)$. If the space $M/K$ is a smooth manifold, we
may identify the algebras $C^\infty(M/K)$ and $C^\infty \left(
M\right) ^K$.  Hence there exists a Poisson structure on $M/K$
such that the canonical projection $M\rightarrow M/K$ is a Poisson
map. The space $M/K$ is called the { reduced Poisson manifold.}

\begin{remark}\label{nsm}
In order to shorten the notation we shall say that $M/K$ inherits a Poisson structure from $M$ even in case when the quotient $M/K$ is not smooth. In this case the Poisson structure on $M/K$ should be understood in the sense that the set $C^\infty \left( M\right) ^K$ is a Poisson subalgebra in $C^\infty \left( M\right)$.
\end{remark}

The following proposition proved in \cite{RIMS} gives a sufficient criterion for $K$
to be an admissible subgroup of a Poisson-Lie group $A$.
\begin{proposition}
\label{admiss}Let $\left( {\frak a},{\frak a}^{*}\right) $ be the tangent
Lie bialgebra of $A$. A connected Lie subgroup $K\subset A$ with Lie algebra
${\frak k}\subset {\frak a}$ is admissible if ${\frak k}^{\perp }\subset
{\frak a}^{*}$ is a Lie subalgebra.
\end{proposition}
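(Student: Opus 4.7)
The plan is to reduce the Poisson--subalgebra condition to an infinitesimal one and then observe that the hypothesis on $\mathfrak{k}^{\perp}$ is exactly what is needed. Since $K$ is connected, a function $\varphi \in C^\infty(M)$ lies in $C^\infty(M)^K$ if and only if $\rho(X)\varphi = 0$ for every $X \in \mathfrak{k}$, where $\rho: \mathfrak{a} \to \mathrm{Vect}(M)$ denotes the infinitesimal action. Hence I only have to show that $\rho(X)\{\varphi,\psi\}_M = 0$ whenever $\varphi,\psi$ are annihilated by $\rho(\mathfrak{k})$ and $X \in \mathfrak{k}$.

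The key tool is the infinitesimal form of the Poisson action axiom. Differentiating the Poisson--map property of $A \times M \to M$ (with the product Poisson structure $\pi_A \oplus \pi_M$ on the source) at the identity of $A$ produces the identity
\[
L_{\rho(X)}\pi_M \;=\; -(\rho \wedge \rho)(\delta_A(X)), \qquad X \in \mathfrak{a},
\]
where $\delta_A: \mathfrak{a} \to \mathfrak{a} \wedge \mathfrak{a}$ is the cobracket of $A$, i.e.\ the dual of the bracket $[\,,\,]_{*}$ on $\mathfrak{a}^{*}$. Pairing this with $d\varphi \otimes d\psi$ and rearranging gives
\[
\rho(X)\{\varphi,\psi\}_M \;=\; \{\rho(X)\varphi,\psi\}_M + \{\varphi,\rho(X)\psi\}_M \;-\; \bigl\langle \delta_A(X),\; \rho^{*}(d\varphi)\wedge \rho^{*}(d\psi)\bigr\rangle,
\]
where $\rho^{*}: T^{*}_mM \to \mathfrak{a}^{*}$ is the pointwise transpose of $\rho$. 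For $\varphi,\psi \in C^\infty(M)^K$ the first two terms on the right vanish. Moreover, the $K$--invariance of $\varphi$ gives $\langle \rho^{*}(d\varphi),Y\rangle = \rho(Y)\varphi = 0$ for all $Y \in \mathfrak{k}$, so $\rho^{*}(d\varphi), \rho^{*}(d\psi) \in \mathfrak{k}^{\perp}$. By the duality between $\delta_A$ and $[\,,\,]_{*}$ the remaining term equals $\langle X,\, [\rho^{*}(d\varphi),\rho^{*}(d\psi)]_{*}\rangle$; if $\mathfrak{k}^{\perp}$ is a Lie subalgebra of $\mathfrak{a}^{*}$ the inner bracket lies in $\mathfrak{k}^{\perp}$ and therefore pairs trivially with $X \in \mathfrak{k}$, giving $\rho(X)\{\varphi,\psi\}_M = 0$ and hence admissibility of $K$.

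The main obstacle is deriving the identity $L_{\rho(X)}\pi_M = -(\rho \wedge \rho)(\delta_A(X))$ with the correct signs directly from the Poisson action axiom; this amounts to a careful differentiation of $\pi_{A\times M}$ at $e \in A$, using that $\delta_A$ is the linearisation of $\pi_A$ at $e$. Everything else is a short algebraic step whose underlying content is the standard equivalence that $\mathfrak{k}^{\perp} \subset \mathfrak{a}^{*}$ is a Lie subalgebra if and only if $\delta_A(\mathfrak{k}) \subset \mathfrak{k}\wedge\mathfrak{a} + \mathfrak{a}\wedge\mathfrak{k}$. The argument carries over verbatim to the non--smooth situation of Remark \ref{nsm}, since the entire calculation takes place inside $C^\infty(M)$.
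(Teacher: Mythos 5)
The paper does not prove this proposition itself --- it is quoted from \cite{RIMS} --- and your argument is precisely the standard one given there: reduce $K$-invariance to infinitesimal invariance using connectedness, invoke the infinitesimal characterisation $L_{\rho(X)}\pi_M = -(\rho\wedge\rho)(\delta_A(X))$ of Poisson actions, and observe that the error term $\langle X, [\rho^{*}(d\varphi),\rho^{*}(d\psi)]_{*}\rangle$ vanishes for $X\in\mathfrak k$ exactly when $\mathfrak k^{\perp}$ is closed under $[\,,\,]_{*}$. The proof is correct (the sign ambiguity you flag is immaterial to the vanishing argument), so there is nothing to add.
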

In particular, $A$ itself is admissible. (Note that $K\subset A$ is a
Poisson--Lie subgroup if and only if ${\frak k}^{\perp }\subset {\frak a}^{*}$ is
an { ideal}; in that case the tangent Lie bialgebra of $K$ is $\left(
{\frak k},{\frak a}^{*}/{\frak k}^{\bot }\right) .)$

Even if $M$ is symplectic the reduced Poisson bracket on $M/K$ is
usually degenerate. The difficult part of reduction is the
description of the symplectic leaves in $M/K$. In case of Hamiltonian
group actions the appropriate technique is provided by the use of the moment
map. Although a similar notion of the { nonabelian moment map} in
the context of Poisson group theory is also available \cite{Lu}, it
is less convenient. We shall use the most general Poisson reduction scheme suggested in \cite{Ma}.
This scheme generalizes various reduction procedures in Poisson and symplectic geometry.

Let $M$ be a Poisson manifold with Poisson bracket $\{\cdot,\cdot\}_M$, $C\subset M$ a submanifold and $i : C\rightarrow M$ the corresponding inclusion. Denote by $P_M\in \Gamma(\bigwedge^2T\,M)$(here and below $\Gamma$ stands for the space of sections of the corresponding bundle) the Poisson tensor associated to the Poisson bracket on $M$. Let
$E \subset TM|_C$ be a subbundle of the tangent bundle of $M$ restricted to $C$. Assume that

(A1) $E \bigcap TC$ is an integrable distribution in $TC$; $E \bigcap TC$ defines a foliation $\Phi$ on $C$.

(A2) The foliation $\Phi$ is regular, so the space of leaves $C/\Phi$ is a smooth manifold with projection $\pi: C \rightarrow C/\Phi$ being a submersion.

(A3) The bundle $E$ leaves the Poisson bracket of $M$ invariant in the sense that if $\varphi,\psi$ are smooth functions on $M$ with
differentials vanishing on $E$ then the differential of the function $\{\varphi,\psi\}_M$ also vanishes on $E$.

The triple $(M,C,E)$ is called Poisson reducible if $C/\Phi$ has a unique Poisson structure $\{\cdot ,\cdot \}_{C/\Phi}$ such
that for any (locally defined) smooth functions $\varphi,\psi$ on $C/\Phi$, and any (locally defined) smooth extensions
$\overline{\varphi},\overline{\psi}$ of $\varphi \circ \pi, \psi \circ \pi$ to $M$, with differentials vanishing on $E$, we have
\begin{equation}\label{Pbr}
\{\varphi ,\psi \}_{C/\Phi}\circ \pi=\{\overline{\varphi},\overline{\psi}\}_M \circ i = <P_M,d\overline{\varphi}\wedge d\overline{\psi}>\circ i.
\end{equation}

\begin{proposition}{\bf (\cite{Ma}, Theorem 2.2)}\label{Reduce}
Assume that conditions (A1)-(A3) are satisfied. Let $P_M^\sharp:T^*M \rightarrow TM$ be the map induced by the Poisson tensor $P_M$ of the Poisson manifold $M$, i.e. $\alpha(P_M^\sharp(\beta))=P_M(\alpha,\beta)$ for any $\alpha, \beta \in T^*M$. Then the triple $(M,C,E)$ is Poisson reducible if and only if
\begin{equation}\label{cond1}
P_M^\sharp(E^0)\subset TC+E,
\end{equation}
where for $x\in C$ $E^0_x=\{\alpha_x\in T^*_xM | \alpha_x|_{E_x}=0\}$, i.e. $E^0$ is the annihilator of $E$.
\end{proposition}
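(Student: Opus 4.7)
The plan is to verify that the condition $P_M^\sharp(E^0) \subset TC + E$ is precisely what is needed for the prescription in (\ref{Pbr}) to descend to a well-defined Poisson bracket on $C/\Phi$. In one direction I must check that the right-hand side of (\ref{Pbr}) is (i) independent of the choice of extension $\overline{\varphi},\overline{\psi}$, (ii) constant along the leaves of $\Phi$, and (iii) satisfies the Jacobi identity. In the other direction I must show that failure of the condition can be detected by choosing suitable test extensions.

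For sufficiency, given $\varphi,\psi$ on $C/\Phi$, local extensions $\overline{\varphi},\overline{\psi}$ of $\varphi\circ \pi,\psi\circ\pi$ with differentials vanishing on $E$ exist by a routine construction using a local frame adapted to the splitting of $TM|_C$. I would first check independence of extension: if $\overline{\varphi}_1,\overline{\varphi}_2$ are two such extensions of $\varphi\circ\pi$, then $g=\overline{\varphi}_1-\overline{\varphi}_2$ vanishes on $C$ and has $dg|_{E|_C}=0$, so $dg|_C \in (TC)^0\cap E^0 = (TC+E)^0$. Then
\[
\{g,\overline{\psi}\}_M\circ i = -dg\bigl(P_M^\sharp(d\overline{\psi})\bigr)\circ i,
\]
and since $d\overline{\psi}|_C\in E^0$, the hypothesis $P_M^\sharp(E^0)\subset TC+E$ forces $P_M^\sharp(d\overline{\psi})|_C\in TC+E$, which is annihilated by $dg|_C$. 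Next, for leaf-invariance, any section $X$ of $\Phi = E\cap TC$ lies in $E$, and assumption (A3) applied to $\overline{\varphi},\overline{\psi}$ gives $d\{\overline{\varphi},\overline{\psi}\}_M|_E=0$, so $X$ annihilates $\{\overline{\varphi},\overline{\psi}\}_M|_C$; hence the expression descends to $C/\Phi$. Bilinearity, skew-symmetry and Leibniz rule are inherited from $\{\cdot,\cdot\}_M$. For the Jacobi identity, (A3) is used once more: since $d\{\overline{\varphi},\overline{\psi}\}_M|_E=0$, the function $\{\overline{\varphi},\overline{\psi}\}_M$ is itself an admissible extension of $\{\varphi,\psi\}_{C/\Phi}\circ\pi$, and the iterated bracket on $C/\Phi$ lifts to the iterated bracket on $M$, so Jacobi transfers.

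For necessity, suppose the triple is Poisson reducible yet $P_M^\sharp(\alpha)\notin (TC+E)_x$ for some $x\in C$ and $\alpha\in E^0_x$. Pick $\beta\in (TC+E)^0_x=(TC)^0_x\cap E^0_x$ with $\beta\bigl(P_M^\sharp(\alpha)\bigr)\neq 0$. Locally extend $\alpha$ to $d\overline{\varphi}_x$ for some $\overline{\varphi}$ with $d\overline{\varphi}|_{E|_C}=0$, and extend $\beta$ to $d\overline{\psi}_x$ for some $\overline{\psi}$ that vanishes on $C$ and has $d\overline{\psi}|_{E|_C}=0$; such a $\overline{\psi}$ corresponds to the zero function on $C/\Phi$. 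Reducibility therefore demands that the right-hand side of (\ref{Pbr}) vanish at $x$, yet it equals $-\beta\bigl(P_M^\sharp(\alpha)\bigr)\neq 0$, a contradiction. The main subtlety I expect is the local extension lemma producing $\overline{\varphi}$ from a prescribed covector $\alpha\in E^0_x$ with the additional global property $d\overline{\varphi}|_{E|_C}=0$; this is handled by working in a tubular neighbourhood with coordinates adapted to $C$ and to a smooth complement of $TC\cap E$ in $E$. The conceptual heart of the argument is the double role played by (A3), which simultaneously delivers leaf-invariance of the reduced bracket and the validity of $\{\overline{\varphi},\overline{\psi}\}_M$ as an admissible extension for the Jacobi identity.
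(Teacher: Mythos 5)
The paper does not prove this proposition: it is imported verbatim from Marsden--Ratiu \cite{Ma}, Theorem 2.2, so the only meaningful comparison is with that source. Your argument is correct and is essentially the standard Marsden--Ratiu proof --- well-definedness of (\ref{Pbr}) by pairing $dg|_C\in (TC+E)^0$ against $P_M^\sharp(E^0)\subset TC+E$, the double use of (A3) for leaf-invariance and for transferring the Jacobi identity, and the converse via a test extension of the zero function on $C/\Phi$ --- and the one step you leave unverified, the local extension lemma producing admissible extensions with prescribed differential at a point, is exactly the technical point handled in \cite{Ma} by the adapted tubular-neighbourhood construction you describe.
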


\begin{remark}
If the foliation $\Phi$ is not regular and the condition (A2) is not satisfied Proposition \ref{Reduce} still holds in the sense that one can define a unique Poisson bracket $\{\cdot ,\cdot \}_{C/\Phi}$ on the algebra of functions on $C$ annihilated by vector fields from $\Gamma(E \bigcap TC)$, and this bracket satisfies relation (\ref{Pbr}). In order to shorten the notation we shall say that $C/\Phi$ is equipped with a Poisson structure even in case when the foliation $\Phi$ is not regular and the space of leaves $C/\Phi$ is not a smooth manifold.
\end{remark}

\begin{remark}\label{nsmp}
Let $K\times M \rightarrow M$ be an action of a Lie group on a Poisson manifold $M$. Assume that the set $C^\infty \left( M\right) ^K$ of $K$-invariants is a Poisson subalgebra in $C^\infty \left( M\right)$. Let ${\frak H}$ be the distribution on $M$ spanned by the vector fields generated by the action of $K$ on $M$, and let $C$ be an integral manifold for the integrable distribution generated by the Hamiltonian vector fields of $K$-invariant smooth functions on $M$. Let $E$ be the restriction of ${\frak H}$ to $C$. Since the algebra $C^\infty \left( M\right) ^K$ is a Poisson subalgebra in $C^\infty \left( M\right)$ condition (A3) is satisfied for the triple $(M,C,E)$. Condition (\ref{cond1}) is also satisfied since Hamiltonian vector fields of $K$--invariant functions are tangent to $C$ by definition. Therefore if condition (A1) is fulfilled for the distribution $E \bigcap TC$ then by Proposition \ref{Reduce} the space of leaves $C/\Phi$ naturally acquires a Poisson structure which should be understood in the sense of the previous remark if the foliation $\Phi$ is not regular. $C/\Phi$ is a Poisson subspace of $M/K$, where in the nonsmooth case the Poisson structure on the quotients should be understood in the sense of the previous remark and of Remark \ref{nsm}.

If $C$ is invariant under the action of $K$ then the distribution $E \bigcap TC$ consists of tangent spaces to $K$--orbits, and condition (A1) is obviously satisfied. Note that in this case the space of leaves $C/\Phi$ is isomorphic to the quotient $C/K$.

The construction described above can be applied, in particular, in case when $K\times M \rightarrow M$ is the restriction of a Poisson group action $A\times M \rightarrow M$ of a Poisson--Lie group $A$ to an admissible subgroup $K\subset A$.
\end{remark}

%%%%%%%%%%%%%%%%%%%%%%%%%%%%%%%%%%%%%%%%%%%%%%%%%%%%%%%%%%%%%%%%%%%%%%%%%%%%%%%%%%%%%

\section{Poisson Reduction and deformed W algebras}

\setcounter{equation}{0}

In this section we equip the slices defined in Proposition \ref{prop2} with Poisson structures.
We keep the notation introduced in Sections \ref{slices1} and \ref{plg}.

Let $G$ be a complex simple algebraic group, $\g$ its Lie algebra, $\h\subset \g$ a Cartan subalgebra in $\g$.
Let $s\in W$ be an element of the Weyl group of the pair $(\g,\h)$, $\p$ and $\b$ the parabolic subalgebra and the Borel subalgebra associated to $s$ in Section  \ref{slices1}, $\n$ the nilradical of $\p$, $N\subset G$ the subgroup corresponding to $\n$.

Let $\k$ be the nilradical of ${\mathfrak b}$. Denote by $\opb$ and $\opk$ the opposite Borel and nilpotent subalgebras of $\g$. Let $\h_0^\perp$ be the orthogonal complement, with respect to the Killing form, in $\h$ to $\mathbb{C}\h_0=\{x\in \h, s(x)=x\}$.
Now consider the following operator $r$ on $\g$:
\begin{equation}\label{r}
r=P_{\k} - P_{\opk} +r_0,~r_0=\frac{1+s}{1-s}P_{\h_0^\perp},
\end{equation}
where $P_{\k},P_{\opk}$ and $P_{\h_0^\perp}$ are the orthogonal projection operators, with respect to the Killing form,
onto $\k, \opk$ and $\h_0^\perp$ in the direct vector space decomposition
\begin{equation}\label{decr}
\g=\k+\opk+\h_0^\perp+\mathbb{C}\h_0.
\end{equation}
By the classification theorem for classical r--matrixes proved in \cite{BD} the operator $r$
is a solution to the modified classical Yang-Baxter equation. Moreover, $r$ is skew--symmetric with respect to the Killing form. In case of r-matrix (\ref{r}) the subalgebras $\b_\pm$ and
${\frak n}_\pm$ introduced in (\ref{bnpm}) are $\b_+={\mathfrak b}$, $\b_-=\opb$,
${\frak n}_+=\k$, ${\frak n}_-=\opk$.

Now we equip the group $G$ with the standard Lie-Poisson bracket (\ref{pbr}) associated to r--matrix (\ref{r}). By Proposition
\ref{dressingact} the action by conjugations of the Poisson--Lie group $G$
on the Poisson manifold $G_*$ is Poisson if $G_*$ carries Poisson bracket (\ref{tau}) associated with the same r-matrix. We would like to show that the unipotent radical $N$ of the parabolic subgroup $P \subset G$ is an admissible subgroup of the Poisson--Lie group $G$, and hence the conjugation action of $G$ on $G_*$ can be restricted to the subgroup $N$ in such a way that the quotient $G_*/N$ carries a natural Poisson structure. Then we show that the slice $N_sZs^{-1}$ introduced in Proposition \ref{prop1} is a Poisson submanifold of the quotient $G_*/N$.

\begin{lemma}
\label{radmiss}
Let $G$ be a complex simple algebraic group, $\g$ its Lie algebra, $\h\subset \g$ a Cartan subalgebra in $\g$.
Let $s\in W$ be an element of the Weyl group of the pair $(\g,\h)$, $\p$ and $\b$ the parabolic subalgebra and the Borel subalgebra associated to $s$ in Section  \ref{slices1}, $\n$ the nilradical of $\p$, $N\subset G$ the subgroup corresponding to $\n$, $r$ the classical r-matrix (\ref{r}) on $\g$. Then $N\subset G$ is an admissible subgroup in the Poisson-Lie group $G$ equipped with the standard Poisson bracket (\ref{pbr}) associated to $r$.
\end{lemma}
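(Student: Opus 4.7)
The plan is to invoke Proposition \ref{admiss}: since the tangent Lie bialgebra of $G$ is factorizable and $\g^*$ is identified with $\g$ via the Killing form with dual bracket $[X,Y]_* = \frac{1}{2}([rX,Y] + [X,rY])$, the admissibility of $N$ reduces to showing that the Killing-form annihilator $\n^\perp \subset \g$ is closed under $[\cdot,\cdot]_*$.

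The first step is to identify $\n^\perp$. Invariance of the Killing form implies that $(\g)_m$ pairs nondegenerately with $(\g)_{-m}$ and is orthogonal to all other components in the $\bar{h}_0$-grading, hence $\n^\perp = \bigoplus_{m \leq 0}(\g)_m = \p$ itself (not the opposite parabolic $\overline{\p}$, as one might initially guess, given the asymmetric way $\p$ is defined relative to $\n$). Since $\p$ is already a Lie subalgebra of $\g$, it is enough to verify that $r$ maps $\p$ into itself: for then $rX, rY \in \p$ forces $[rX,Y]$ and $[X,rY]$ into $[\p,\p]\subset \p$, so $[X,Y]_* \in \p$.

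The heart of the argument is therefore the inclusion $r(\p) \subset \p$, which I would check summand by summand in the decomposition (\ref{decr}). The required description of $\p$ is: since every negative root $\alpha$ satisfies $\bar{h}_0(\alpha) \leq 0$, we have $\p \cap \k = \k$; since a positive root $\alpha$ lies in $\p$ only when $\bar{h}_0(\alpha) = 0$, that is $\alpha \in \Delta_{i_0}$, we have $\p \cap \opk = \opk \cap \l$; and $\p$ contains the full Cartan $\h_0^\perp \oplus \mathbb{C}\h_0$. Consequently, for $X \in \p$ one has $P_\k(X) \in \k\subset\p$ and $P_{\opk}(X) \in \opk\cap \l\subset \p$, while $P_{\h_0^\perp}(X) \in \h_0^\perp$, on which the Cayley-type operator $\frac{1+s}{1-s}$ is well defined (as $1-s$ is invertible there) and stabilizes $\h_0^\perp$ because $s$ does; thus $r_0(X) \in \h_0^\perp \subset \p$.

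The main subtlety, beyond identifying $\n^\perp$ correctly, is the characterization of $\p \cap \opk$: the only positive-root components allowed in elements of $\p$ come from the Levi, which is exactly what forces $P_{\opk}$ to preserve $\p$. Once this is noted, each of the three summands of $r$ preserves $\p$ separately, the inclusion $r(\p) \subset \p$ holds, and the admissibility of $N$ follows. No compatibility condition between $s$ and the Poisson--Lie structure enters at this stage; that will be needed later, in Theorem \ref{mainth}, to make $N_sZs^{-1}$ a Poisson submanifold of $G_*/N$.
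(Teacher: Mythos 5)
Your proof is correct, and its overall strategy coincides with the paper's: both invoke Proposition \ref{admiss}, identify the annihilator $\n^{\perp}$ under the Killing form with the parabolic $\p$ itself (using that $(\g)_m$ pairs with $(\g)_{-m}$), and then verify that $\p$ is closed under the dual Lie bracket. Where you diverge is in how that last closure is checked. The paper realizes $\g^{*}$ as the subalgebra $\{(r_+X,r_-X)\}\subset\g+\g$, writes out the image of $\n^{\perp}$ there explicitly as $\{(Z_++\frac12(r_0+id)Z_\h,\,-Z_-+\frac12(r_0-id)Z_\h)\}$ with $Z_+\in\k$, $Z_-\in\opk\cap\p$, $Z_\h\in\h$, and observes that $\k$ and $\opk\cap\p$ are subalgebras normalized by $\h$. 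You instead stay in a single copy of $\g$ and show $r(\p)\subset\p$, so that formula (\ref{rbr}) immediately gives $[\p,\p]_{*}\subset[\p,\p]\subset\p$; this is somewhat more economical and avoids the double altogether. Both verifications ultimately rest on the same structural facts, namely $\k\subset\p$, $\opk\cap\p=\opk\cap\l$, and the invertibility of $1-s$ on $\h_0^{\perp}$. One small caveat on your closing remark: it is not quite right to say that no compatibility between $s$, $N$ and the Poisson--Lie structure enters here --- the inclusions $\k\subset\p$ and $\opk\cap\p\subset\l$, which your argument (and the paper's) depends on, hold precisely because the Borel $\b$ defining the $r$--matrix and the parabolic $\p$ defining $N$ are both built from the same element $\bar{h}$ associated to $s$ in Section \ref{slices1}; that is the compatibility condition alluded to in the introduction.
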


\begin{proof} By Proposition \ref{admiss} it suffices to show that $\p={\n}^{\perp }\subset
{\frak g}^{*}$ is a Lie subalgebra. Recall that according to formula (\ref{imbd}) $\g^*$ can be identified with the Lie subalgebra of $\g + \g$ formed by the elements $(r_+X , r_-X)\in \g + \g, ~X\in \g$ . Using formula (\ref{r}) one can describe the Lie algebra $\g^*$ as follows
\begin{eqnarray}\label{g*}
\qquad  \g^*\simeq \{(X_++\frac{1}{2}(r_0+id)X_\h~, -X_-+\frac{1}{2}(r_0-id)X_\h)\in \g + \g, \\  X_+ \in {\frak k}, X_-\in\opk, X_{\h}\in
\h \}. \nonumber
\end{eqnarray}

Using description (\ref{g*}) of the Lie algebra $\g^*$  one can identify $\p={\n}^{\perp }\subset
{\frak g}^{*}$ with the following Lie subalgebra in $\g +\g$
\begin{eqnarray}\label{nperp}
\qquad \quad \p={\n}^{\perp }\simeq \{(Z_++\frac{1}{2}(r_0+id)Z_\h~, -Z_-+\frac{1}{2}(r_0-id)Z_\h), \\  Z_+ \in {\frak k}, Z_-\in {\opk}\cap \p, Z_{\h}\in
\h \}, \nonumber
\end{eqnarray}

Since $\k$ and ${\opk}\cap \p$ are Lie subalgebras in $\g$ and $\h$ normalizes both of them the linear subspace in $\g^*$ defined by the r.h.s. of formula (\ref{nperp}) is a Lie subalgebra of $\{(r_+X , r_-X)\in \g + \g, ~X\in \g \} \simeq \g^*$.

\end{proof}

Now we restrict the action of $G$ on $G_*$ by conjugations to the subgroup $N\subset G$. By
Lemma \ref{radmiss} and the remark before Proposition \ref{admiss}
the space $G_*/N$ inherits a reduced Poisson structure from $G_*$.

 Let $s\in G$ be the normal representative of the element $s\in W$. Let $Z$ and $N_s$ be the subgroups of $G$ defined by (\ref{defz}) and (\ref{defns}).
Then by Proposition \ref{prop2} the quotient $N_sZs^{-1}N/N\simeq N_sZs^{-1}$ is a subspace of the quotient $G_*/N$ which carries the reduced Poisson bracket. We shall show that actually $N_sZs^{-1}N/N\simeq N_sZs^{-1}\subset G_*/N$ is a Poisson submanifold in $G_*/N$.

\begin{theorem}\label{mainth}
Let $G$ be a complex simple algebraic group, $\g$ its Lie algebra, $\h\subset \g$ a Cartan subalgebra in $\g$.
Let $s\in W$ be an element of the Weyl group of the pair $(\g,\h)$, $\p$ and $\b$ the parabolic subalgebra and the Borel subalgebra associated to $s$ in Section  \ref{slices1}, $\n$ the nilradical of $\p$, $N\subset G$ the subgroup corresponding to $\n$, $r$ the classical r-matrix (\ref{r}) on $\g$. Let $Z$ and $N_s$ be the subgroups of $G$ defined by (\ref{defz}) and (\ref{defns}). Restrict the conjugation action of $G$ on $G_*$ to the subgroup $N$.
Then the Poisson structure on $G_*$ induces a reduced Poisson structure on $G_*/N$, and $N_sZs^{-1}$ is a Poisson submanifold of $G_*/N$.
\end{theorem}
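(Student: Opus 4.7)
The proof has two parts, the first essentially formal and the second requiring substantive work. For the first, Lemma \ref{radmiss} shows that $N$ is an admissible subgroup of $G$; combined with the fact (Proposition \ref{dressingact}) that the conjugation action $G\times G_*\to G_*$ is a Poisson group action, Proposition \ref{admiss} implies that $C^\infty(G_*)^N$ is a Poisson subalgebra of $C^\infty(G_*)$, which is by Remark \ref{nsm} the reduced Poisson structure on $G_*/N$.

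For the second assertion, I apply the Poisson reduction scheme of Proposition \ref{Reduce}, in the form of Remark \ref{nsmp}, with $M=G_*$, $C=NZs^{-1}N$ (a smooth subvariety by Proposition \ref{prop2}), and $E\subset TG_*|_C$ the distribution spanned by the vector fields of the $N$-conjugation action restricted to $C$. Condition (A3) is precisely admissibility of $N$, established in Lemma \ref{radmiss}. Since $C$ is invariant under conjugation by $N$, one has $E\subset TC$, and $E\cap TC=E$ is the distribution tangent to the $N$-orbits on $C$; by Proposition \ref{prop2} the leaf space of this foliation is $N_sZs^{-1}$, which gives (A1) and (A2).

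The essential step is then the compatibility condition $P^\sharp_{G_*}(E^0)\subset TC+E$. A cotangent vector $d\varphi(L)$ annihilates $E_L$ iff $\zeta:=\nabla\varphi(L)-\nabla'\varphi(L)$ lies in $\n^\perp=\p$, as one sees by pairing with the infinitesimal conjugation vector $\xi L-L\xi$ for $\xi\in\n$. A direct calculation from the bracket (\ref{tau}), using $r_\pm=\tfrac12(r\pm 1)$ together with the relation $\nabla\varphi(L)={\rm Ad}\,L(\nabla'\varphi(L))$ between the two gradients, reduces the Hamiltonian vector field of $\varphi$ at $L$ to the compact form
\begin{equation*}
X_\varphi(L) \;=\; 2(r_+\zeta)\,L \;-\; 2L\,(r_-\zeta).
\end{equation*}
By $N$-equivariance of $X_\varphi$ it then suffices to prove $X_\varphi(L)\in T_L(NZs^{-1}N)$ at points $L\in N_sZs^{-1}$.

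The main obstacle is this tangency verification, which proceeds by a structural case analysis on $\zeta$. Write $\zeta=\zeta_\n+\zeta_\l$ along $\p=\n+\l$ and evaluate $r_\pm\zeta$ using (\ref{r}). For $\zeta_\n\in\n\subset\k$ one has $r_+\zeta_\n=\zeta_\n$ and $r_-\zeta_\n=0$, giving $X_\varphi(L)=2\zeta_\n L$; tangency is verified by decomposing $\n=\n_s\oplus\n^{(s)}$, where $\n^{(s)}:=\{X\in\n:{\rm Ad}\,s(X)\in\n\}$, and matching against the explicit description $T_L(NZs^{-1}N)=\{\xi L-L\xi:\xi\in\n\}+L\cdot({\rm Ad}\,s(\n_s)+\z)$ obtained by differentiating the isomorphism $\alpha$ of Proposition \ref{prop2}, exploiting the ${\rm Ad}\,s$-cycle structure on the root vectors of $\n^{(s)}$. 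For $\zeta_\l\in\l$ one splits further along $\l=(\l\cap\k)+(\l\cap\opk)+\h$: the root contributions yield vectors that match directly with the root-space tangent directions along $Zs^{-1}$ using ${\rm Ad}\,s|_\m={\rm id}$, and the $\h$-contribution is handled by the Cayley-type identity $r_+|_{\h_0^\perp}=(1-s)^{-1}$ coming from $r_0=(1+s)(1-s)^{-1}$, which ensures that the resulting tangent vector lies in the $L\cdot\z$ direction modulo conjugation by $\n$. Assembling these contributions gives $X_\varphi(L)\in T_LC+E_L$ and completes the verification of (\ref{cond1}).
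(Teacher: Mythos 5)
Your proof follows the same route as the paper's in all essentials. The first part is identical. For the second, the paper also reduces, via Remark \ref{nsmp}, to showing that the Hamiltonian vector fields of $N$-invariant functions are tangent to $NZs^{-1}N$; it characterizes such functions by $\zeta=\nabla\varphi-\nabla'\varphi\in\n^{\perp}=\p$, obtains the same formula $\xi_\varphi(g)=r_+\zeta-{\rm Ad}\,g\,(r_-\zeta)$ in the right trivialization, and then verifies tangency by splitting $\zeta$ into its $\k$-, $(\opk\cap\p)$- and $\h$-components, using the inclusions $\k\subset\n+\z$, $\opk\cap\p\subset\z$ and the same Cayley identity $r_+|_{\h_0^\perp}=(1-s)^{-1}$, $r_-|_{\h_0^\perp}=s(1-s)^{-1}$, whence $r_+Z_\h-{\rm Ad}\,s^{-1}(r_-Z_\h)=Z_{\h_0}\in\z$.

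There is, however, one step that is false as written: the appeal to ``$N$-equivariance of $X_\varphi$'' to reduce the tangency check to points $L\in N_sZs^{-1}$. The conjugation action of $N$ on $G_*$ is a Poisson \emph{group} action, not an action by Poisson diffeomorphisms: for $v\in N$, $v\neq e$, the Poisson--Lie tensor (\ref{pbr}) of $G$ does not vanish at $v$ (that would require ${\rm Ad}\,v$ to commute with $r$), so $g\mapsto vgv^{-1}$ is not a Poisson map of $G_*$ and does not intertwine Hamiltonian vector fields. Concretely, $\zeta(vgv^{-1})={\rm Ad}\,v\,(\zeta(g))$, and the difference $\xi_\varphi(vgv^{-1})-{\rm Ad}\,v\,(\xi_\varphi(g))$ involves the commutators $[r_\pm,{\rm Ad}\,v]$ applied to $\zeta$, which do not vanish in general. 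The discrepancy does happen to lie in $\n+{\rm Ad}(vgv^{-1})(\n)\subset T(NZs^{-1}N)$, because ${\rm Ad}\,v\,(X)-X\in\n$ for $X\in\p$ and $r_\pm$ preserve $\n$ and $\p$; but that is exactly the content that needs proving, and proving it is the same computation as checking tangency at a general point. The paper sidesteps the issue by working directly at an arbitrary point $g=n_szs^{-1}n$ of $NZs^{-1}N$; the terms ${\rm Ad}\,n(\cdot)-(\cdot)\in\n$ appearing in (\ref{z1}) and (\ref{z4}) are precisely the correction to the failed equivariance. So you should delete the equivariance claim and run your case analysis at a general point. A further simplification: in the right trivialization one has $T_g(NZs^{-1}N)=\n+\z+{\rm Ad}\,g\,(\n)$, so the $\zeta_\n$-contribution is tangent immediately, with no need for the decomposition $\n=\n_s\oplus\n^{(s)}$ or the ${\rm Ad}\,s$-cycle argument.
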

\begin{proof}
The conjugation action of $G$ on $G_*$ is a Poisson group action.
By Lemma \ref{radmiss} $N\subset G$ is an admissible subgroup. Therefore the quotient $G_*/N$ is naturally equipped with a Poisson structure.

By Remark \ref{nsmp} in order to prove that $N_sZs^{-1}=N_sZs^{-1}N/N$ is a Poisson submanifold of $G_*/N$ it is sufficient to check that the Hamiltonian vector fields generated by
$N$-invariant functions on $G_*$ are
tangent to $N_sZs^{-1}N$.

Let $\varphi \in C^\infty \left( {{
G}_{*}}\right) ^{N}$. Then $\varphi \left( vg\right) =\varphi \left( gv\right) $ for all $v\in N$, $g\in G_*$, and hence $
Z(g)=\nabla \varphi(g) -\nabla ^{\prime }\varphi(g) \in \p$ for any $g\in G$. Since $
\nabla ^{\prime }\varphi (g)={\rm Ad}\;g^{-1} (\nabla \varphi (g))$
we can rewrite the Poisson bracket (\ref{tau}) on ${G}_{*}$ in the
following form:
\[
\left\{ \varphi ,\psi \right\}_*(g)=2\left\langle
r_+Z(g)-{\rm Ad}\;g  (r_-Z(g)),\nabla \psi (g)\right\rangle.
\]
Thus using the right trivialization of $TG$ the
Hamiltonian field generated by $\varphi$ can be written in the following form:
\begin{equation}\label{hamvf1}
\xi _\varphi \left( g\right) =r_+Z(g)-{\rm Ad}\;g(r_-Z(g)).
\end{equation}

According to formulas (\ref{imbd}) and (\ref{nperp}) we also have
\begin{equation}\label{rz}
r_+Z=Z_++\frac{1}{2}(r_0+id)Z_\h~,r_-Z= -Z_-+\frac{1}{2}(r_0-id)Z_\h,
\end{equation}
where $$Z_+ \in {\frak k}, Z_-\in {\opk}\cap \p, Z_{\h}\in \h$$
are the components of $Z$ with respect to the direct vector space decomposition
$$
\g=\k+\opk+\h.
$$

Now assume that $g\in N_sZs^{-1}N$, $g=n_szs^{-1}n$, $n_s\in N_s,n\in N$, $z\in Z$.
Then from (\ref{hamvf1}) and (\ref{rz}) we deduce
\begin{equation}\label{hamvf}
\xi _\varphi \left( n_scs^{-1}n \right) =Z_++r_+Z_\h-{\rm Ad}\;\left( n_szs^{-1}n\right)(-Z_-+r_-Z_\h).
\end{equation}

On the other hand, in
the right trivialization of $TG$ the tangent
space $T_{n_szs^{-1}n}NZs^{-1}N$ is identified with $\n+\z+{\rm Ad}\;\left( n_szs^{-1}n\right) \n=\n+\z+{\rm Ad}\;\left( n_scs^{-1}\right) \n$.

Now using the fact that $\z\subset \l$, where $\l$ is the Levi factor of $\p$, and $\n$ is an ideal in $\p$ one checks straightforwardly that the vector field (\ref{hamvf}) is contained in $T_{n_szs^{-1}n}N_sZs^{-1}N$ at each point $n_szs^{-1}n$, $n_s\in N_s,n\in N$, $z\in Z$. Indeed, observe that definitions of $\k$, $\opk$, $\p$ and $\z$  imply the following inclusions $\k\subset \n+\z$, ${\opk}\cap \p\subset \z$, and hence
\begin{equation}\label{inZ}
Z_+\in \n+\z\subset T_{n_szs^{-1}n}N_sZs^{-1}N, Z_-\in \z.
\end{equation}
Recalling that $\z\subset \l$ we deduce from the second inclusion in (\ref{inZ}) that ${\rm Ad}n(Z_-)-Z_-\in \n$. Therefore
\begin{equation}\label{z1}
{\rm Ad}\;\left( n_szs^{-1}\right)({\rm Ad}n(Z_-)-Z_-)\in {\rm Ad}\;\left( n_szs^{-1}\right)\n \subset T_{n_szs^{-1}n}N_sZs^{-1}N.
\end{equation}

From the second inclusion in (\ref{inZ}) we also have
\begin{equation}\label{z2}
{\rm Ad}\;\left( n_szs^{-1}\right)Z_-={\rm Ad}\;\left( n_sz\right)Z_-\in \n+\z\subset T_{n_szs^{-1}n}N_sZs^{-1}N.
\end{equation}
Combining (\ref{z1}) and (\ref{z2}) we conclude that
\begin{equation}\label{z3}
{\rm Ad}\;\left( n_szs^{-1}n\right)Z_-\in T_{n_szs^{-1}n}N_sZs^{-1}N.
\end{equation}

Now consider the remaining terms in the r.h.s of (\ref{hamvf}). Firstly, by the definition of $r$ we obviously have
\begin{equation}\label{rzo}
r_+(Z_\h)=\frac{1}{1-s}Z_{\h_0^\perp}+\frac{1}{2}Z_{\h_0}\in \h, r_-(Z_\h)=\frac{s}{1-s}Z_{\h_0^\perp}-\frac{1}{2}Z_{\h_0}\in \h,
\end{equation}
where $Z_{\h_0^\perp}$ and $Z_{\h_0}$ are the components of $Z_\h\in \h$ with respect to the orthogonal direct vector space decomposition $\h=\h_0^\perp+\mathbb{C}\h_0$. Similarly to (\ref{z1}) we deduce that
\begin{equation}\label{z4}
{\rm Ad}\;\left( n_szs^{-1}\right)({\rm Ad}n(r_-Z_\h)-r_-Z_\h)\in {\rm Ad}\;\left( n_szs^{-1}\right)\n \subset T_{n_szs^{-1}n}N_sZs^{-1}N.
\end{equation}
Since $r_-Z_\h\in \h$ and $\h$ normalizes $\z$ and $\n$ we also obtain
\begin{equation}\label{z5}
{\rm Ad}\;\left( n_szs^{-1}\right)(r_-Z_\h)-{\rm Ad}s^{-1}(r_-Z_\h)\in \n+\z \subset T_{n_szs^{-1}n}N_sZs^{-1}N.
\end{equation}
Formulas (\ref{rzo}) and inclusion $\mathbb{C}\h_0\subset \z$ imply that $r_+Z_\h-{\rm Ad}s^{-1}(r_-Z_\h)=Z_{\h_0}\in \z\subset T_{n_szs^{-1}n}N_sZs^{-1}N$, and hence recalling (\ref{z4}), (\ref{z5}) we conclude
\begin{equation}\label{z6}
r_+Z_\h-{\rm Ad}\;\left( n_szs^{-1}n\right)(r_-Z_\h)\in T_{n_szs^{-1}n}N_sZs^{-1}N.
\end{equation}
Finally combining the first inclusion in (\ref{inZ}) and inclusions (\ref{z3}), (\ref{z6}) we obtain
$$
\xi _\varphi \left( n_scs^{-1}n \right) =Z_++r_+Z_\h-{\rm Ad}\;\left( n_szs^{-1}n\right)(-Z_-+r_-Z_\h)\in T_{n_szs^{-1}n}N_sZs^{-1}N.
$$
This concludes the proof.

\end{proof}

Note that for any $X\in \g$ and any regular function $\varphi$ on $G$ the functions $\langle\nabla \varphi,X \rangle$ and  $\langle\nabla ^{\prime }\varphi,X \rangle$ are regular. Therefore the space of regular functions on $G$ is closed with respect to Poisson bracket (\ref{tau}).
Since by Proposition \ref{prop2} the projection $N_sZs^{-1}N\rightarrow N_sZs^{-1}$ induced by the map $G_*\rightarrow G_*/N$ is a morphism of varieties, Proposition \ref{Reduce} implies that the algebra of regular functions on $N_sZs^{-1}$ is closed under the reduced Poisson bracket defined on $N_sZs^{-1}$ in the previous theorem. We call this Poisson algebra the deformed Poisson W--algebra associated to the Weyl group element $s\in W$, or, more precisely, to the conjugacy class of $s\in W$, and denote it by $W_{s}(G)$.

%%%%%%%%%%%%%%%%%%%%%%%%%%%%%%%%%%%%%%%%%%%%%%%%%%%%%%%%%%%%%%%%%%%%%%%%%%%%%%%%%%%%%%%%

\section{The algebra $W_{s}(G)$ in case of subregular slices and simple singularities}
\label{sl3}

\setcounter{equation}{0}

In this section we consider deformed Poisson W-algebras in case of slices of dimension $r+2$, where $r$ is the rank of the underlying simple algebraic group $G$.

Recall that the definition of transversal slices to adjoint orbits in a complex simple Lie algebra $\g$ and to conjugacy classes in a complex simple algebraic group
$G$ given in \cite{SL} was motivated by the study of simple singularities. Simple singularities appear in algebraic group theory as some singularities of the fibers
of the conjugation quotient map $\delta_G: G \rightarrow H/W$ generated by the inclusion $\mathbb{C}[H]^W\simeq
\mathbb{C}[G]^G\hookrightarrow \mathbb{C}[G]$, where $H$ is a maximal torus of $G$ and $W$ is the Weyl group of the pair $(G, H)$. Some fibers of this map are singular,
the singularities correspond to irregular elements of $G$, and one can study these singularities by restricting $\delta_G$ to certain transversal slices to conjugacy classes in $G$. Simple singularities can be identified with the help of the following proposition proved in \cite{SL}.

\begin{proposition}{\bf (\cite{SL}, Section 6.5)}\label{sld}
Let $S$ be a transversal slice for the conjugation action of $G$ on itself. Assume that $S$  has dimension $r+2$, where $r$ is the rank of $G$.
Then the fibers of the restriction of the adjoint quotient map to $S$, $\delta_G:S \rightarrow H/W$, are normal surfaces with isolated singularities.
A point $x\in S$ is an isolated singularity of such a fiber iff $x$ is subregular in $G$, and $S$ can be regarded as a deformation of this singularity.

Moreover, if $t\in H/W$, and $x\in S$ is a singular point of the fiber $\delta_G^{-1}(t)$, then $x$ is a rational double point of type $_h\Delta_i$ for a suitable
$i\in\{1,\ldots,m\}$, where $\Delta_i$ are the components in the decomposition of the Dynkin diagram $\Delta(t)$ of the centralizer $Z_G(t)$ of $t$ in $G$,
$\Delta(t)=\Delta_1\cup\ldots \cup \Delta_m$. If $\Delta_i$ is of type $A$, $D$ or $E$ then $_h\Delta_i=\Delta_i$; otherwise $_h\Delta_i$ is the homogeneous diagram
of type $A$, $D$ or $E$ associated to $\Delta_i$ by the rule $_hB_n=A_{2n-1}$, $_hC_n=D_{n+1}$, $_hF_4=E_6$, $_hG_2=D_4$.
\end{proposition}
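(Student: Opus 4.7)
The plan is to combine a flatness and dimension count, which will pin the fibers of $\delta_G|_S$ down as normal surfaces whose only singularities occur at subregular elements, with the Brieskorn--Grothendieck classification of simple surface singularities, supplemented by Slodowy's refinement that produces the non--simply--laced types via an outer action of a component group.

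First I would establish that $\delta_G|_S:S\to H/W$ is flat with fibers of dimension $2$. The map $\delta_G:G\to H/W$ is flat with equidimensional fibers of dimension $\dim G-r$, and the transversality of $S$ to conjugacy classes forces $S$ to meet every such fiber in the expected codimension, so the fibers of the restriction are two--dimensional. Because regular elements are dense in every fiber of $\delta_G$ and $\delta_G$ is smooth at them, transversality makes the regular part of each fiber of $\delta_G|_S$ a smooth open surface, and flatness over the smooth base $H/W$ together with smoothness of $S$ yields Cohen--Macaulay fibers which are smooth in codimension one; Serre's criterion then gives normality. A point $x\in S$ is a singular point of its fiber iff $\delta_G$ fails to be smooth at $x$, iff $x$ is irregular in $G$; since the singular locus of a normal surface is finite, each such $x$ must be subregular in $G$, and near it the map $\delta_G|_S$ realises $S$ as a deformation of an isolated surface singularity.

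The final and main step is the identification of the singularity type at a subregular $x\in\delta_G^{-1}(t)$. Writing $x=tu$ for the Jordan decomposition, $u$ lies in $Z_G(t)^\circ$ and is subregular there. A formal slice argument around the semisimple $t$, combined with a Cayley--type transform exchanging the centralizer with its Lie algebra, reduces the germ $(S,x)$ together with its map to $H/W$ to the classical situation of a Slodowy slice through a subregular unipotent of $Z_G(t)^\circ$, whose Dynkin diagram decomposes as $\Delta_1\cup\ldots\cup\Delta_m$. The Brieskorn--Grothendieck theorem identifies the simply--laced components with Kleinian singularities of type $\Delta_i$, and Slodowy's refinement (Section 6.5 of \cite{SL}) shows that for a non--simply--laced component $\Delta_i$ the ambient simply laced germ of type $_h\Delta_i$ carries an action of the component group $Z_G(t)/Z_G(t)^\circ$ whose quotient recovers the folded singularity according to the rules $_hB_n=A_{2n-1}$, $_hC_n=D_{n+1}$, $_hF_4=E_6$, $_hG_2=D_4$. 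The hardest part will be this last verification: checking that the component group acts on the transversal germ precisely by the outer symmetry of the simply laced cover is a case--by--case analysis and forms the technical heart of the argument, making the result genuinely group--theoretic rather than a direct consequence of the Lie algebra theory.
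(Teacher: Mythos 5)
This proposition is imported verbatim from Slodowy's book (Section~6.5) and the paper supplies no proof of it, so there is no internal argument to compare yours against; what you have written is, in outline, Slodowy's own argument. Your first paragraph is the standard route: miracle flatness from equidimensionality of the fibers of $\delta_G$ and smoothness of $S$ and $H/W$, smoothness of $\delta_G$ precisely at regular elements (Steinberg), and Serre's criterion ($R_1$ plus $S_2$) for normality of the two--dimensional fibers.

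Two steps need repair. First, your deduction that every singular point of a fiber is subregular does not follow from ``the singular locus of a normal surface is finite'': finiteness says nothing about which irregular classes occur. The correct argument is that the conjugacy class of an irregular, non--subregular element has codimension strictly greater than $r+2$ in $G$, so by transversality it cannot meet the $(r+2)$--dimensional slice $S$ at all; hence the only irregular points of $S$ are subregular, and these are isolated in each fiber. Second, in the non--simply--laced case you describe the singularity as the \emph{quotient} of the $_h\Delta_i$--germ by the component group. That is not what the proposition asserts and not how Slodowy's formalism works: the rational double point at $x$ \emph{is} the simply laced singularity of type $_h\Delta_i$, and the symmetry group $Z_G(t)/Z_G(t)^\circ$ enters only as extra structure on the germ and its deformation, recorded as a pair $(X,\Gamma)$; one never passes to $X/\Gamma$, which would in general not be a rational double point of the intended type and would prove a different (false) statement for the components of type $B$, $C$, $F_4$, $G_2$. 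With those two corrections your plan is a faithful reduction to Brieskorn--Grothendieck together with Slodowy's case--by--case identification, which is exactly where the paper leaves the matter by citation.
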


Note that all simple singularities of types $A$, $D$ and $E$ were explicitly constructed in \cite{SL} using certain special transversal slices in complex simple Lie algebras $\g$ and the adjoint quotient map $\delta_{\g}: \g \rightarrow \h/W$ generated by the inclusion $\mathbb{C}[\h]^W\simeq
\mathbb{C}[\g]^G\hookrightarrow \mathbb{C}[\g]$, where $\h$ is a Cartan subalgebra of $\g$ and $W$ is the Weyl group of the pair $(\g, \h)$. Below following \cite{S1} we recall an alternative description of simple singularities in terms of transversal slices in algebraic groups. In our construction we shall use the transversal slices defined in Section \ref{slices1}. The corresponding elements $s\in W$ will be associated to subregular nilpotent elements $e$ in $\g$ via the Kazhdan--Lusztig map introduced in \cite{KL}(recall that $e\in \g$ is subregular if the dimension of its centralizer in $\g$ is equal to ${\rm rank}~\g+2$). The Kazhdan--Lusztig map is a certain mapping from the set of nilpotent adjoint orbits in $\g$ to the set of conjugacy classes in $W$. In this paper we do not need the definition of this map in the general case.
We shall only  describe  the values of this map on the subregular nilpotent adjoint orbits.

\begin{lemma}\label{subreg}{\bf (\cite{S1}, Lemma 4)}
Let $\g$ be a complex simple Lie algebra, $\b$ a Borel subalgebra of $\g$ containing a Cartan subalgebra $\h\subset \b$. Let $W$ be the Weyl group of the pair $(\g, \h)$. Denote by $\Gamma=\{\alpha_1,\ldots,\alpha_r\}$, $r={\rm rank}~\g$ the corresponding system of simple positive roots of $\g$ and by $\Delta$ the root system of $\g$. Fix a system of root vectors $e_{\alpha}\in \g$, $\alpha \in \Delta$. One can choose a representative $e$ in the unique subregular nilpotent adjoint orbit of $\g$ and a representative $s_e$ in the conjugacy class in $W$, which corresponds to $e$ under the Kazhdan--Lusztig map, as follows (below we use the convention of \cite{E} for the numbering of simple roots; for the exceptional lie algebras we give the type of $e$ according to classification \cite{BK} and the type of $s_e$ according to classification \cite{C}):

\begin{itemize}

\item
$A_{r}$, $\g=\mathfrak{sl}_{r+1}$,
$$e=e_{\alpha_1}+\ldots +e_{\alpha_{r-1}},$$
the class of $s_e$ is the Coxeter class in a root subsystem $A_{r-1}\subset A_{r}$, and all such subsystems are $W$--conjugate,
$$
s_e=s_{\alpha_1}\ldots s_{\alpha_{r-1}};$$

\item
$B_r$, $\g=\mathfrak{so}_{2r+1}$,
$$e=e_{\alpha_1}+\ldots +e_{\alpha_{r-2}}+e_{\alpha_{r-1}+\alpha_{r}}+e_{\alpha_{r}},$$
the class of $s_e$ is the Coxeter class in a root subsystem $D_{r}\subset B_{r}$, and all such subsystems are $W$--conjugate,
$$
s_e=s_{\alpha_1}\ldots s_{\alpha_{r-2}}s_{\alpha_{r-1}}s_{\alpha_{r-1}+2\alpha_{r}};$$

\item
$C_r$, $\g=\mathfrak{sp}_{2r}$,
$$e=e_{\alpha_1}+\ldots +e_{\alpha_{r-2}}+e_{2\alpha_{r-1}+\alpha_{r}}+e_{\alpha_{r}},$$
the class of $s_e$ is the Coxeter class in a root subsystem $C_{r-1}+A_1\subset C_{r}$, and all such subsystems are $W$--conjugate,
$$
s_e=s_{\alpha_1}\ldots s_{\alpha_{r-2}}s_{2\alpha_{r-1}+\alpha_{r}}s_{\alpha_{r}};$$

\item
$D_r$, $\g=\mathfrak{so}_{2r}$,
$$e=e_{\alpha_1}+\ldots +e_{\alpha_{r-4}}+e_{\alpha_{r-3}+\alpha_{r-2}}+e_{\alpha_{r-2}+\alpha_{r-1}}+e_{\alpha_{r-1}}+e_{\alpha_{r}},$$
the class of $s_e$ is the class $D(a_1)$,
$$
s_e=s_{\alpha_1}\ldots s_{\alpha_{r-2}}s_{\alpha_{r-1}}s_{\alpha_{r-2}+\alpha_{r-1}+\alpha_{r}};$$

\item
$E_6$, the type of $e$ is $E_6(a_1)$,
$$e=e_{\alpha_1} +e_{\alpha_{2}+\alpha_{3}}+e_{\alpha_{4}}+e_{\alpha_{5}}+e_{\alpha_{3}+\alpha_{6}}+e_{\alpha_{6}},$$
$s_e$ has type $E_6(a_1)$,
$$
s_e=s_{\alpha_1}s_{\alpha_{2}+\alpha_{3}}s_{\alpha_{4}}s_{\alpha_{5}}s_{\alpha_{3}+\alpha_{6}}s_{\alpha_{6}};$$

\item
$E_7$, the type of $e$ is $E_7(a_1)$,
$$e=e_{\alpha_1} + e_{\alpha_2} +e_{\alpha_{3}+\alpha_{4}}+e_{\alpha_{5}}+e_{\alpha_{6}}+e_{\alpha_{7}}+e_{\alpha_{4}+\alpha_{7}},$$
$s_e$ has type $E_7(a_1)$,
$$
s_e=s_{\alpha_1}s_{\alpha_2} s_{\alpha_{3}+\alpha_{4}}s_{\alpha_{5}}s_{\alpha_{6}}s_{\alpha_{7}}s_{\alpha_{4}+\alpha_{7}};$$

\item
$E_8$, the type of $e$ is $E_8(a_1)$,
$$e=e_{\alpha_1} + e_{\alpha_2} + e_{\alpha_3} +e_{\alpha_{4}+\alpha_{5}}+e_{\alpha_{5}+\alpha_{8}}+e_{\alpha_{6}}+e_{\alpha_{7}}+e_{\alpha_{8}},$$
$s_e$ has type $E_8(a_1)$,
$$
s_e=s_{\alpha_1}s_{\alpha_2}s_{\alpha_3} s_{\alpha_{4}+\alpha_{5}}s_{\alpha_{5}+\alpha_{8}}s_{\alpha_{6}}s_{\alpha_{7}}s_{\alpha_{8}};$$

\item
$F_4$, the type of $e$ is $F_4(a_1)$,
$$e=e_{\alpha_1} + e_{\alpha_2}  +e_{\alpha_{2}+2\alpha_{3}}+e_{\alpha_{3}+\alpha_{4}},$$
$s_e$ has type $B_4$,
$$
s_e=s_{\alpha_1}s_{\alpha_2}s_{\alpha_{2}+2\alpha_{3}}s_{\alpha_{3}+\alpha_{4}};$$

\item
$G_2$, the type of $e$ is $G_2(a_1)$,
$$e=e_{2\alpha_1+\alpha_2} + e_{\alpha_2},$$
$s_e$ has type $A_2$,
$$
s_e=s_{3\alpha_{1}+\alpha_{2}}s_{\alpha_{2}}.$$

\end{itemize}

\end{lemma}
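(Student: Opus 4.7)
The plan is to proceed case-by-case through the classification of complex simple Lie algebras and verify, for each entry in the list, three claims: (i) the displayed element $e=\sum_\gamma e_\gamma$ lies in the subregular nilpotent orbit of $\g$, (ii) the displayed element $s_e=\prod_\gamma s_\gamma$ lies in the stated conjugacy class of $W$, and (iii) $s_e$ is the image of the orbit of $e$ under the Kazhdan--Lusztig map of \cite{KL}.

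For (i), in the classical types I would compute the Jordan type of $e$ acting on the defining representation and match it against the known subregular partitions, namely $(r,1)$ for $A_r$, $(2r-1,1,1)$ for $B_r$, $(2r-2,2)$ for $C_r$, and $(2r-3,3)$ for $D_r$; this simultaneously gives $\dim\z(e)=r+2$. For the exceptional types one identifies the support of $e$ as a distinguished set of roots inside a suitable Levi subalgebra and reads off the Bala--Carter label $E_i(a_1)$, $F_4(a_1)$, $G_2(a_1)$, each of which is the unique subregular orbit in its Lie algebra. For (ii), every factor $s_\gamma$ is an explicit reflection, and two factors commute precisely when the corresponding roots are orthogonal with respect to the Killing form. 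In the classical cases (and $G_2$) this directly exhibits $s_e$ as a Coxeter element in a root subsystem of the stated type ($A_{r-1}$, $D_r$, $C_{r-1}+A_1$, $A_2$). In the exceptional cases one assembles the graph of non-commuting factors into Carter's admissible diagram and matches it against the tables of \cite{C} to identify the class ($D(a_1)$, $E_i(a_1)$, $B_4$).

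For (iii) I would appeal to the description of the Kazhdan--Lusztig map via Springer theory. The restriction of the KL map to the subregular orbit is well documented: the subregular Springer fiber is a Dynkin curve whose irreducible components carry the Springer representation in a way that, for the orbit of $e$, determines the KL class uniquely. The cleanest verification is to match the displayed $s_e$ against the published tabulation of the KL correspondence, after bringing $s_e$ into a normal form (for instance, an element of minimal length in its conjugacy class). The main technical obstacle is precisely (iii) for the exceptional types, where a direct geometric verification is laborious and one is essentially forced into a case-by-case comparison with the existing literature; once the admissible diagram of $s_e$ has been pinned down in (ii), however, this comparison reduces to bookkeeping.
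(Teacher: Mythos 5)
The paper itself offers no proof of this lemma: it is imported verbatim from \cite{S1} (Lemma 4), so the only comparison available is with the strategy that reference must rely on. Your parts (i) and (ii) are fine in outline: the subregular partitions you quote for the classical types are correct, identifying the class of $s_e$ through Carter's admissible diagrams is exactly how one pins down entries such as $D(a_1)$, $E_6(a_1)$, $E_7(a_1)$, $E_8(a_1)$ and $B_4$, and the orthogonality bookkeeping among the displayed roots is elementary.

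The genuine gap is in (iii). You describe the Kazhdan--Lusztig map as if its value on the subregular orbit were read off from the ordinary Springer correspondence --- ``the subregular Springer fiber is a Dynkin curve whose irreducible components carry the Springer representation in a way that \dots determines the KL class uniquely.'' That is not how the map of \cite{KL} is defined or computed. The Springer correspondence attaches to a nilpotent orbit an irreducible representation of $W$, not a conjugacy class; the Kazhdan--Lusztig map is a different construction, built from nilpotent elements over a local field and fixed--point varieties on affine flag manifolds, and there is no general mechanism that extracts its value from the Dynkin--curve structure of the finite Springer fiber. Its values, including on subregular orbits, are known only through the explicit case--by--case computations of Spaltenstein \cite{SP,SP2} (both cited in this paper's bibliography, and evidently the source relied on in \cite{S1}). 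So the ``published tabulation'' you propose to consult is the entire content of step (iii), not a bookkeeping afterthought, and the intermediate geometric justification you offer for why such a tabulation is determined would not survive scrutiny. To close the gap you should verify directly, case by case, that the conjugacy class of your $s_e$ (as identified in (ii) via \cite{C}) coincides with the class assigned to the subregular orbit in Spaltenstein's computations of the Kazhdan--Lusztig map, rather than routing the argument through Springer fibers.
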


Now using elements $s_e$ defined in Lemma \ref{subreg} and Proposition \ref{prop1} we construct transversal slices to conjugacy classes in the algebraic group $G$.
We start by fixing appropriate elements $h_i\in \h_i$, where $\h_i$ are the two--dimensional $s_e$--invariant subspaces arising in decomposition (\ref{hdec}), and $s_e$ acts on $\h_i$ as rotation with angle $\theta_i=\frac{2\pi}{m_i}\leq\pi$, $m_i\in \mathbb{N}$, or as the reflection with respect to the origin (which also can be regarded as rotation with angle $\pi$). We consider all cases listed in Lemma \ref{subreg} separately.

\begin{enumerate}

\item
$A_{r}$

Let $\h_{min}$ be the unique two--dimensional plane on which $s_e$ acts as rotation with the minimal among of all $\theta_i,i=1,\ldots,K$ angle $\theta_{min}=\frac{2\pi}{r}$. Order the terms in (\ref{hdec}) in such a way that $\h_{min}$ is labeled by the maximal index. One checks straightforwardly that $\overline{\Delta}_{min}=\Delta$ and that for any choice $h_{min}\in \h_{min}$ as in Section \ref{slices1} the length $l(s_e)$ of $s_e$ with respect to the corresponding system of simple positive roots is $r+1$, $l(s_e)=r+1$. Note that in this case $\h_0$ is one dimensional, $\h_0=\mathbb{R}\omega_r$, where $\omega_r$ is the fundamental weight corresponding to $\alpha_r$.

\item
$B_r$

Let $\h_{min}$ be the unique two--dimensional plane on which $s_e$ acts as rotation with the minimal among of all $\theta_i,i=1,\ldots,K$ angle $\theta_{min}=\frac{\pi}{r-1}$, and $\h_1=\mathbb{R}\alpha_r^\vee$. The Weyl group element $s_e$ acts on $\h_1$ as reflection with respect to the origin. Order the terms in (\ref{hdec}) in such a way that $\h_{min}$ is labeled by the maximal index. One checks straightforwardly that $\overline{\Delta}_{min}=\Delta\setminus \{\alpha_r,-\alpha_r\}$, $\overline{\Delta}_1=\{\alpha_r,-\alpha_r\}$, $\Delta=\overline{\Delta}_{min}\bigcup \overline{\Delta}_1$ and that for any choice $h_{min}\in \h_{min}$ and $h_1\in \h_1$ as in Section \ref{slices1} the length $l(s_e)$ of $s_e$ with respect to the corresponding system of simple positive roots is $r+2$, $l(s_e)=r+2$. Note that in this case $\h_0=0$.

\item
$C_r$

Let $\h_{min}$ be the unique two--dimensional plane on which $s_e$ acts as rotation with the minimal among of all $\theta_i,i=1,\ldots,K$ angle $\theta_{min}=\frac{\pi}{r-1}$, and $\h_1=\mathbb{R}\alpha_r^\vee$. The Weyl group element $s_e$ acts on $\h_1$ as reflection with respect to the origin. Order the terms in (\ref{hdec}) in such a way that $\h_{min}$ is labeled by the maximal index. One checks straightforwardly that $\overline{\Delta}_{min}=\Delta\setminus \{\alpha_r,-\alpha_r\}$, $\overline{\Delta}_1=\{\alpha_r,-\alpha_r\}$, $\Delta=\overline{\Delta}_{min}\bigcup \overline{\Delta}_1$ and that for any choice $h_{min}\in \h_{min}$ and $h_1\in \h_1$ as in Section \ref{slices1} the length $l(s_e)$ of $s_e$ with respect to the corresponding system of simple positive roots is $r+2$, $l(s_e)=r+2$. Note that in this case $\h_0=0$.

\item
$D_r$

A root subsystem $D_{r-2}+D_2\subset D_r$ is invariant under the action of $s_e$, and $s_e$ acts on $D_{r-2}$ $(D_2)$ as a Coxeter element of $D_{r-1}\supset D_{r-2}$ $(D_3\supset D_2)$, respectively, with respect to the natural inclusions.
Let $\h_{min}$ be the unique two--dimensional plane in the Cartan subalgebra corresponding to the root subsystem $D_{r-2}$ on which $s_e$ acts as rotation with the angle $\theta_{min}=\frac{\pi}{r-2}$, and let $\h_{1}$ be the unique two--dimensional plane in the Cartan subalgebra corresponding to the root subsystem $D_{2}$ on which $s_e$ acts as rotation with the angle $\theta_{1}=\frac{\pi}{2}$. Order the terms in (\ref{hdec}) in such a way that $\h_{min}$ is labeled by the maximal index. One checks straightforwardly that $\overline{\Delta}_{min}=\Delta\setminus D_2$, $\overline{\Delta}_1=D_2$, $\Delta=\overline{\Delta}_{min}\bigcup \overline{\Delta}_1$ and that for any choice $h_{min}\in \h_{min}$ and $h_1\in \h_1$ as in Section \ref{slices1} the length $l(s_e)$ of $s_e$ with respect to the corresponding system of simple positive roots is $r+2$, $l(s_e)=r+2$. Note that in this case $\h_0=0$.

\item
$E_6$, $E_7$, $E_8$, $G_2$

Let $\h_{min}$ be the unique two--dimensional plane on which $s_e$ acts as rotation with the minimal among of all $\theta_i,i=1,\ldots,K$ angle $\theta_{min}$, $\theta_{min}=\frac{2\pi}{9}$ for $E_6$, $\theta_{min}=\frac{\pi}{7}$ for $E_7$, $\theta_{min}=\frac{\pi}{12}$ for $E_8$, $\theta_{min}=\frac{\pi}{3}$ for $G_2$. Order the terms in (\ref{hdec}) in such a way that $\h_{min}$ is labeled by the maximal index. One checks straightforwardly that $\overline{\Delta}_{min}=\Delta$ and that for any choice $h_{min}\in \h_{min}$ as in Section \ref{slices1} the length $l(s_e)$ of $s_e$ with respect to the corresponding system of simple positive roots is $r+2$, $l(s_e)=r+2$. Note that in this case $\h_0=0$.

\item
$F_4$

The multiplicity of the eigenvalue $e^{\frac{\pi i}{3}}$ of $s_e$ with the minimal possible argument is $2$. Therefore there are infinitely many two--dimensional planes in $\h_{\mathbb{R}}$ on which $s_e$ acts as rotation with angle $\theta_{min}=\frac{\pi}{3}$. It turns out that one can choose a two--dimensional plane $\h_{min}\subset \h_{\mathbb{R}}$ on which $s_e$ acts as rotation with angle $\theta_{min}=\frac{\pi}{3}$ and which is not orthogonal to any root. Order the terms in (\ref{hdec}) in such a way that $\h_{min}$ is labeled by the maximal index. Then $\overline{\Delta}_{min}=\Delta$ and for any choice $h_{min}\in \h_{min}$ and $h_1\in \h_1$ as in Section \ref{slices1} the length $l(s_e)$ of $s_e$ with respect to the corresponding system of simple positive roots is $r+2$, $l(s_e)=r+2$. Note that in this case $\h_0=0$.

\end{enumerate}

\begin{remark}\label{br}
Note that in all cases 1-6 considered above the parabolic subalgebra $\p$ associated to $s_e$ with the help of the corresponding element $\bar{h}_0$ is a Borel subalgebra.
\end{remark}

The following proposition follows straightforwardly from Propositions \ref{prop2} and \ref{prop1}.

\begin{proposition}\label{subreg11}{\bf (\cite{S1}, Proposition 5)}
Let $G$ be a complex simple algebraic group with Lie algebra $\g$, $H$ is a maximal torus of $G$, $\h$ the Cartan subalgebra in $\g$ corresponding to $H$.
Let $e$ be the subregular nilpotent element in $\g$ defined in the previous lemma and $s_e$ the element of the Weyl group of the pair $(\g,\h)$ associated to $e$ in Lemma \ref{subreg}. Denote by $s\in G$ the normal representative of $s_e$ in $G$ constructed in Section \ref{slices1}. Denote by $\b$ the Borel subalgebra of $\g$ associated to $s_e$ in Remark \ref{br}.
Let $B$ be the Borel subgroup of $G$ corresponding to the Borel subalgebra $\b$, $N$ the unipotent radical of $B$, $\overline B$ the opposite Borel subgroup in $G$, $\overline{N}$ the unipotent radical of $\overline B$.

Then the variety $N_sZs^{-1}$, where $Z=\{z\in H\mid szs^{-1}=z\}$, $N_s=\{n\in N\mid sns^{-1}\in \overline N\}$, is a transversal slice  to the set of conjugacy classes in $G$ and the conjugation map $N\times N_sZs^{-1}\rightarrow NZs^{-1}N$ is an isomorphism of varieties. The slice $N_sZs^{-1}$ has dimension $r+2$, $r=~{\rm rank}~\g$.
\end{proposition}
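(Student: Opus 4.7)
The plan is to observe that Proposition \ref{subreg11} is essentially a case-by-case verification that the machinery of Propositions \ref{prop2} and \ref{prop1} applies to the specific data $(e, s_e)$ arising from the Kazhdan--Lusztig map in Lemma \ref{subreg}, followed by a dimension count. I would organize the argument in three stages.

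First, I would verify the hypotheses of Proposition \ref{prop2}. In each of the cases 1--6 listed before Proposition \ref{subreg11}, the subspaces $\h_i$ and the chosen elements $h_i \in \h_i$ are described explicitly, so condition (\ref{cond}) can be arranged by rescaling (this is explicitly noted in Section \ref{slices1}). By Remark \ref{br}, the parabolic subalgebra $\p$ produced from $\bar{h}_0 = \sum_{k=1}^M h_{i_k}$ is in fact a Borel subalgebra $\b$, so the Levi factor coincides with the Cartan $\h$, and $L = H$. Consequently the unipotent radical of $\p$ is $\k$, and $N$ is the corresponding unipotent subgroup of the Borel $B$, while $Z = \{z \in H \mid szs^{-1}=z\}$ and $N_s = \{n\in N \mid sns^{-1} \in \overline{N}\}$ are exactly the objects defined in (\ref{defz}) and (\ref{defns}). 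All the hypotheses of Propositions \ref{prop2} and \ref{prop1} are therefore in place.

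Second, I would invoke Proposition \ref{prop2} to conclude that the conjugation map $N \times N_sZs^{-1} \to NZs^{-1}N$ is an isomorphism of varieties, and Proposition \ref{prop1} to conclude that $N_sZs^{-1}$ is a transversal slice to conjugacy classes in $G$. These give the two main assertions directly.

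Third, I would compute $\dim N_sZs^{-1} = \dim N_s + \dim Z$ case by case. Since ${\rm dim}~N_s = l(s_e)$ by the remark following (\ref{defns}), and since $\dim Z = \dim \h_0$ (because $L=H$ so $Z$ is just the torus of $s$-fixed points in $H$, whose Lie algebra is $\mathbb{C}\h_0$), the dimension equals $l(s_e) + \dim \h_0$. Inspecting the enumeration 1--6: in case $A_r$ one has $l(s_e)=r+1$ and $\dim\h_0 = 1$, giving $r+2$; in every other case $l(s_e) = r+2$ and $\dim \h_0 = 0$, again giving $r+2$. These values of $l(s_e)$ and $\dim\h_0$ are the explicit assertions made in the enumeration and can be checked directly from the reflection expressions for $s_e$ in Lemma \ref{subreg} (the stated length is automatically an upper bound, and the fact that $\bar h$ lies in a Weyl chamber with respect to which $s_e$ has precisely those inversions gives the matching lower bound).

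The only nontrivial point of the whole argument, and the part I would spend most care on, is the verification in each type that the length of $s_e$ with respect to the system of simple roots determined by $\bar h$ really equals $r+1$ (type $A$) or $r+2$ (all other types); the rest is bookkeeping. Since the reduced expressions for $s_e$ in Lemma \ref{subreg} have exactly this number of factors, the upper bound $l(s_e)\leq r+1$ or $r+2$ is immediate, and one only needs that these expressions are reduced relative to the specific simple system produced by $\bar h$. This is a finite check controlled entirely by the explicit decompositions described in cases 1--6 above, and once it is in place the statement follows.
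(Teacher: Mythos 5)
Your proposal matches the paper's argument: the paper states that Proposition \ref{subreg11} "follows straightforwardly from Propositions \ref{prop2} and \ref{prop1}," with the dimension count $\dim N_sZs^{-1}=l(s_e)+\dim\h_0=r+2$ supplied by the explicit case-by-case data in the enumeration preceding the proposition (where the values of $l(s_e)$ and $\h_0$ are recorded for each type). Your identification of the length computation as the only nontrivial check is consistent with how the paper handles it.
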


An immediate corollary of the last proposition and of Proposition \ref{sld} is the following construction of simple singularities in terms of the conjugation quotient map.

\begin{proposition}\label{subreg22}{\bf (\cite{S1}, Proposition 6)}
Let $G$ be a complex simple algebraic group with Lie algebra $\g$, $H$ is a maximal torus of $G$, $\h$ the Cartan subalgebra in $\g$ corresponding to $H$.
Let $e$ be the subregular nilpotent element in $\g$ defined in the Lemma \ref{subreg} and $s_e$ the element of the Weyl group of the pair $(\g,\h)$ associated to $e$ in Lemma \ref{subreg}. Denote by $s\in G$ the representative of $s_e$ in $G$ constructed in Section \ref{slices1}. Let $\b$ be the Borel subalgebra of $\g$ associated to $s_e$ in Remark \ref{br}.
 Denote by $B$ the Borel subgroup of $G$ corresponding to the Borel subalgebra $\b$, and by $N$  the unipotent radical of $B$. Let $\overline B$ be the opposite Borel subgroup in $G$, $\overline{N}$ the unipotent radical of $\overline B$
 $Z=\{z\in H\mid szs^{-1}=z\}$ and $N_s=\{n\in N\mid sns^{-1}\in \overline N\}$.

Then fibers of the restriction of $\delta_G:G\rightarrow G/H$ to $N_sZs^{-1}$ are normal surfaces with isolated singularities. A point $x\in N_sZs^{-1}$ is an isolated singularity of such a fiber iff $x$
is subregular in $G$.

Moreover, if  $t\in H/W$, and $x\in N_sZs^{-1}$ is a singular point of the fiber $\delta_G^{-1}(t)$, then $x$ is a rational double point of type $_h\Delta_i$ for a suitable $i\in\{1,\ldots,m\}$, where $\Delta_i$ are the components in the decomposition of the Dynkin diagram $\Delta(t)$ of the centralizer $Z_G(t)$ of $t$ in $G$, $\Delta(t)=\Delta_1\cup\ldots \cup \Delta_m$. If $\Delta_i$ is of type $A$, $D$ or $E$ then $_h\Delta_i=\Delta_i$; otherwise $_h\Delta_i$ is the homogeneous diagram of type $A$, $D$ or $E$ associated to $\Delta_i$ by the rule $_hB_n=A_{2n-1}$, $_hC_n=D_{n+1}$, $_hF_4=E_6$, $_hG_2=D_4$.
\end{proposition}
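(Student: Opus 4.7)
The plan is to deduce the statement directly from Slodowy's general result, Proposition \ref{sld}, once we check its hypotheses for the particular slice $N_sZs^{-1}$ built from the subregular data of Lemma \ref{subreg}. Proposition \ref{sld} asserts exactly the conclusions we want (normality of fibers, isolated subregular singularities, and the type labeling by $_h\Delta_i$) for any transversal slice $S$ to the conjugation action of $G$ on itself of dimension $r+2$. So the whole task reduces to verifying two properties of $N_sZs^{-1}$: that it is transversal to conjugacy classes in $G$, and that its dimension equals $r+2$.

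First I would invoke Proposition \ref{subreg11}, which is exactly the combination of these two properties: it states both that $N_sZs^{-1}$ is a transversal slice to the set of conjugacy classes and that $\dim N_sZs^{-1}=r+2$. The transversality part is the content of Proposition \ref{prop1} applied to the Weyl group element $s_e$ and the Borel subalgebra $\b$ associated to $s_e$ in Remark \ref{br}; the Kostant-type isomorphism $\alpha:N\times N_sZs^{-1}\to NZs^{-1}N$ of Proposition \ref{prop2} is what underlies the transversality. The dimension count comes from $\dim N_s=l(s_e)$ together with $\dim Z=\dim\h_0+\dim\m$, where $\m$ is the semisimple part of the Levi; in every case 1--6 listed above, the explicit description of $h_{\min}$ and $h_1$ yields $l(s_e)=r+1$ or $r+2$ and $\dim\h_0$ equal to $1$ or $0$ correspondingly (with $\m=0$ since $\p$ is Borel by Remark \ref{br}), so $\dim(N_sZs^{-1})=l(s_e)+\dim Z=r+2$ in all cases.

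Once these two properties are in hand, Proposition \ref{sld} applies verbatim with $S=N_sZs^{-1}$ and gives the normality of the fibers of $\delta_G|_{N_sZs^{-1}}$, isolatedness of singularities, the equivalence (singular point) $\Leftrightarrow$ (subregular element), and the rational-double-point classification via the inhomogenization rule $_hB_n=A_{2n-1}$, $_hC_n=D_{n+1}$, $_hF_4=E_6$, $_hG_2=D_4$. The proposition's statement is then immediate.

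The main (really the only) obstacle is the bookkeeping of the dimension check across the case-by-case list preceding the proposition: one must trust that the explicit choices of $h_{\min}$ and $h_1$ made for each Dynkin type indeed produce length $l(s_e)=r+1$ in case $A_r$ and $l(s_e)=r+2$ in the remaining cases, and that $\dim Z$ compensates accordingly so that $\dim(N_sZs^{-1})=r+2$ uniformly. This is routine but non-trivial; it is, however, precisely what Proposition \ref{subreg11} records. Thus the proof is a citation of Propositions \ref{subreg11} and \ref{sld}.
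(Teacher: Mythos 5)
Your proposal is correct and is essentially identical to the paper's argument: the paper states the proposition as an immediate corollary of Proposition \ref{subreg11} (transversality and $\dim N_sZs^{-1}=r+2$) combined with Slodowy's Proposition \ref{sld}. The extra dimension bookkeeping you supply ($l(s_e)$ plus $\dim Z$ in each case) is exactly what the case-by-case discussion preceding Proposition \ref{subreg11} records, so nothing is missing.
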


Theorem \ref{mainth} implies that any slice of the form $N_sZs^{-1}$ inherits a Poisson structure from the Poisson manifold $G_*$ associated to r--matrix (\ref{r}). On the other hand by Proposition \ref{subreg22} the slices $N_sZs^{-1}$ of dimensions $r+2$ introduced in Proposition \ref{subreg11} can be regarded as deformations of simple singularities. Therefore the corresponding algebras $W_{s}(G)$ can be regarded as Poisson deformations of simple singularities. Note that the regular functions which are invariant with respect to the action of $G$ on $G_*$ by conjugations lie in the center of the Poisson algebra of regular functions on $G_*$ (see \cite{dual}). The restrictions of these functions to the slices $N_sZs^{-1}$ lie in the center of the Poisson algebra  $W_{s}(G)$. Therefore the fibers of the conjugation quotient map $\delta_G:N_sZs^{-1}\rightarrow H/W$ also inherit Poisson structures from $N_sZs^{-1}$. In particular, the Poisson structures on the singular fibers of $\delta_G$ generate Poisson brackets on the coordinate rings of Kleinian singularities.

In general the Poisson structure on $N_sZs^{-1}$ and the Poisson bracket of the corresponding algebra $W_{s}(G)$ are difficult to calculate. We shall consider the simplest nontrivial example when one can do that explicitly.

The first nontrivial example of Poisson deformed W--algebras related to simple singularities appears in case when $\g={\mathfrak{sl}_3}$ and $s$ is the reflection with respect to a single root.
We are going to describe the corresponding slice $N_sZs^{-1}$ and the algebra $W_{s}({SL_3})$ explicitly.
We use the usual matrix realization of the Lie algebra ${\mathfrak{sl}_3}$ by complex $3\times 3$ traceless matrices,
$$
{\mathfrak{sl}_3}=\{X\in Mat_3(\mathbb{C}), {\rm tr}~X=0\},
$$
and take $\h$ to be the subalgebra of traceless diagonal matrices.

If the system of simple positive roots $\Gamma$ of ${\mathfrak{sl}_3}$ is fixed as in case 1 considered after Lemma \ref{subreg} then $s$ becomes the longest element in the Weyl group of the pair $(\g,\h)$ with respect to $\Gamma$, and $\n$ is the nilradical of $\g$ which can be realized as the subalgebra of lower triangular matrices.

The representative $s\in SL(3)$ of the element $s\in W$ that we introduced in Section \ref{slices1} looks as follows:
\begin{equation}\label{s}
s=\left(
    \begin{array}{ccc}
      0 & 0 & 1 \\
      0 & -1 & 0 \\
      1 & 0 & 0 \\
    \end{array}
  \right).
\end{equation}
The Lie group $Z$ of the $\h$--component $\z$ of the centralizer of $s$ consists of the matrices of the following form
\begin{equation}\label{Z}
Z=\{\left(
  \begin{array}{ccc}
    t & 0 & 0 \\
    0 & t^{-2} & 0 \\
    0 & 0 & t \\
  \end{array}
\right),~t\in \mathbb{C}^*\},
\end{equation}
and the Lie group $N_s$ coincides with $N$ in this case, $N=N_s=\{n\in N\mid sns^{-1}\in \overline N\}$.
The transversal slice $N_sZs^{-1}$ can be described explicitly using (\ref{s}) and (\ref{Z}),
\begin{equation}
N_sZs^{-1}=\{S=\left(
  \begin{array}{ccc}
    0 & 0 & t \\
    0 & -t^{-2} & \alpha \\
    t & \beta & \gamma \\
  \end{array}
\right),~t\in \mathbb{C}^*,~\alpha,\beta,\gamma\in \mathbb{C}\}.
\end{equation}

The algebra of regular functions on the slice $N_sZs^{-1}$ is generated by four functions $\varphi_\alpha,\varphi_\beta,\varphi_\gamma,\varphi_t$,
$$
\varphi_\alpha(S)=\alpha,~\varphi_\beta(S)=\beta,~\varphi_\gamma(S)=\gamma,~\varphi_t(S)=t,~S\in N_sZs^{-1}.
$$
In order to describe the Poisson algebra $W_{s}({SL_3})$ we have to calculate the Poisson brackets of these functions. According to Proposition \ref{Reduce} and formula \ref{Pbr} we have to find $N$-invariant extensions of $\varphi_\alpha, \varphi_\beta, \varphi_\gamma, \varphi_t$ to $N_sZs^{-1}N$ and then calculate their Poisson brackets in the Poisson algebra of regular functions on $G_*$.

Introduce functions $\overline{\varphi}_\alpha, \overline{\varphi}_\beta, \overline{\varphi}_\gamma, \overline{\varphi}_t$ on $G_*$ as follows
\begin{eqnarray}\label{ext}
  \overline{\varphi}_\alpha (g)&=&g_{13}^2(g_{11}g_{23}-g_{13}g_{21})+g_{23},  \nonumber \\
 \overline{\varphi}_\beta (g)&=& g_{32}-g_{12}g_{13}^{-3}(1+g_{13}^2g_{33}), \\
  \overline{\varphi}_\gamma (g) &=& g_{11}+g_{22}+g_{33}+g_{13}^{-2}, \nonumber \\
  \overline{\varphi}_t (g) &=& g_{13}, \nonumber
\end{eqnarray}
where
\begin{equation}\label{g}
g=\left(
    \begin{array}{ccc}
      g_{11} & g_{12} & g_{13} \\
      g_{21} & g_{22} & g_{23} \\
      g_{31} & g_{32} & g_{33} \\
    \end{array}
  \right)\in SL(3).
\end{equation}

One can check straightforwardly that the restrictions of the functions $\overline{\varphi}_\alpha,\overline{\varphi}_\beta,\overline{\varphi}_\gamma,\overline{\varphi}_t$ to $N_sZs^{-1}N$ are invariant with respect to the action of $N$ on $N_sZs^{-1}N$ by conjugations and that their restrictions to $N_sZs^{-1}$ coincide with $\varphi_\alpha,\varphi_\beta,\varphi_\gamma,\varphi_t$, respectively.

Now according to Proposition \ref{Reduce} and formula (\ref{Pbr}) the Poisson brackets of the functions $\varphi_\alpha,\varphi_\beta,\varphi_\gamma,\varphi_t$ in the Poisson algebra $W_{s}({SL_3})$ are equal to the restrictions to $N_sZs^{-1}$ of the Poisson brackets of $\overline{\varphi}_\alpha,\overline{\varphi}_\beta,\overline{\varphi}_\gamma,\overline{\varphi}_t$ regarded as functions on the Poisson manifold $G_*$. These Poisson brackets can be calculated using Poisson brackets of functions $\varphi_{ij}$, $\varphi_{ij}(g)=g_{ij}$, where $g\in SL(3)$ is given by formula (\ref{g}).

From formula (\ref{tau}) and from the definition (\ref{r}) of the r-matrix we have
\begin{eqnarray}\label{gst}
\{\varphi_{ij},\varphi_{km}\}=\varphi_{im} \varphi_{kj} (\varepsilon_{ik}+\varepsilon_{mj})+2\delta_{im}\sum_{l>i}\varphi_{kl}\varphi_{l j}- \\
-2\delta_{jk}\sum_{l>j}\varphi_{il}\varphi_{lm}+\varphi_{ij}\varphi_{km}(\delta_{im}-\delta_{jk}),\nonumber
\end{eqnarray}
$$
\varepsilon_{ij}=\left\{ \begin{array}{c}
                     1 ~{\rm if}~i>j \\
                     0 ~{\rm if}~i=j \\
                     -1 ~{\rm if}~i<j
                   \end{array}\right. .
$$

Now Proposition \ref{Reduce}, formulas (\ref{gst}) and (\ref{ext}) imply that
\begin{eqnarray}\label{wps}
 \{\varphi_\alpha, \varphi_\beta \}&=&2(\varphi_t^2-\varphi_t^{-4}-\varphi_t^{-2}\varphi_\gamma), \nonumber \\
 \{\varphi_t, \varphi_\alpha \}&= &\varphi_t \varphi_\alpha, \nonumber\\
  \{\varphi_t, \varphi_\beta \}&= &-\varphi_t \varphi_\beta, \\
   \{\varphi_t, \varphi_\gamma \}&=&0,\nonumber\\
   \{\varphi_\gamma, f \}&=&\{\varphi_t^{-2}, f \} \nonumber
\end{eqnarray}
for any regular function $f=f(\alpha,\beta,\gamma,t)$.
Poisson brackets (\ref{wps}) completely determine the Poisson structure of the Poisson algebra $W_{s}({SL_3})$.

Now we describe the associated Poisson structures on the fibers of the conjugation quotient map $\delta_G :N_sZs^{-1}\rightarrow H/W$, where $H$ is the maximal torus in $G$ with Lie algebra $\h$.
The fibers of the adjoint quotient map are the intersections of the level surfaces of the regular class functions on $G$ with the slice $N_sZs^{-1}$.

Recall that the regular functions which are invariant with respect to the action of $G$ on $G_*$ by conjugations lie in the center of the Poisson algebra of regular functions on $G_*$ (see \cite{dual}). The restrictions of these functions to the slice $N_sZs^{-1}$ lie in the center of the Poisson algebra  $W_{s}({SL_3})$.

In case of $\g={\mathfrak{sl}_3}$ there are two algebraically independent $G$--invariant functions on $G_*$, $\overline{\varphi}_1(g)={\rm tr}g$ and $\overline{\varphi}_2(g)={\rm tr}g^2$. The restrictions ${\varphi}_1$ and ${\varphi}_2$ of these functions to $N_sZs^{-1}$ can be expressed in terms of $\varphi_\alpha,\varphi_\beta,\varphi_\gamma,\varphi_t$ as follows
\begin{eqnarray}
{\varphi}_1&=&\varphi_\gamma- \varphi_t^{-2}, \\
 {\varphi}_2&=& 2\varphi_t^{2}+\varphi_t^{-4}+2\varphi_\alpha\varphi_\beta +\varphi_\gamma^2. \nonumber
\end{eqnarray}

If we denote by $Z_{c_1,c_2}$ the Poisson ideal in $W_{s}({SL_3})$ generated by the elements
${\varphi}_1-c_1$ and ${\varphi}_2-c_2$ then the quotient $W_{s}({SL_3})/Z_{c_1,c_2}$ is a Poisson algebra which can be regarded as Poisson algebra of functions on the fiber of the conjugation quotient map $\delta_G:N_sZs^{-1}\rightarrow H/W$ defined by the equations ${\varphi}_1(S)=c_1,~{\varphi}_2(S)=c_2,~S\in N_sZs^{-1}$.

In particular if $c_1=c_2=3$ the corresponding fiber is singular and lies in the unipotent variety of $G$. If we introduce new variables $x=t^2\alpha,~y=t^2\beta,~z=t^2+1$ then the elements of the singular fiber satisfy the following equations in $N_sZs^{-1}$
\begin{eqnarray}\label{ss}
\gamma = \frac{1}{z-1}+3, z\neq 1, \\
  z^3+ x y = 0. \nonumber
\end{eqnarray}
The first equation in (\ref{ss}) allows to eliminate $\gamma$, and the second equation defines $A_2$--type simple singularity according to the A-D-E classification.

If we introduce the functions $\varphi_x,\varphi_y,\varphi_z\in W_s({SL_3})/Z_{3,3}$ on the singular fiber of the conjugation quotient map by
$$
\varphi_x(x,y,z)=x,\varphi_y(x,y,z)=y,\varphi_z(x,y,z)=z
$$
then their Poisson brackets in the Poisson algebra $W_{s}({SL_3})/Z_{3,3}$ take the form
\begin{eqnarray}\label{brsing}
\{\varphi_x, \varphi_y \}&=&6(\varphi_z-1)\varphi_z^2, \nonumber \\
 \{\varphi_z, \varphi_x \}&= &2(\varphi_z-1)\varphi_x, \\
  \{\varphi_z, \varphi_y \}&= &-2(\varphi_z-1)\varphi_y. \nonumber\\
\end{eqnarray}

Poisson structure (\ref{brsing}) on the singular fiber of the conjugation quotient map is proportional to the Poisson structure on the singular fiber of the adjoint quotient map $\delta_{\g}:s(e)\rightarrow \h/W$ derived in \cite{Pr} for the Slodowy slice $s(e)$ at a subregular element $e\in {\mathfrak{sl}_3}$, the coefficient proportionality being $\varphi_z-1$.

%%%%%%%%%%%%%%%%%%%%%%%%%%%%%%%%%%%%%%%%%%%%%%%%%%%%%%%%%%%%%%%%%%%%%%%%%%%%%%%%%%%%%%%%

\section{The algebra $W_{s}(G)$ in case when $s$ is the reflection with respect to a long root}
\label{long}

\setcounter{equation}{0}

In this section we explicitly describe the Poisson structure of the algebra $W_s(G)$ in case when $s$ is the reflection with respect to a long root in the root system of the Lie algebra $\g$ of a complex simple algebraic group $G$.

Let
$\h$ be a Cartan subalgebra of $\g$, and let $\Delta$ be the root
system of $\g$ relative to $\h$.  Let
$\Gamma'=\{\alpha_1,\ldots,\alpha_\ell\}$ be a basis of simple roots
in $\Delta$.  If $\g$ is not of type $A_r$ or $C_r$, there is a
unique long root in $\Gamma'$ linked with the lowest root on the extended Dynkin diagram of $\g$; we
call it $\beta$. For $\g$ of type $A_r$ and $C_r$ we set
$\beta=\alpha_r$. Let $s=s_\beta$ be the reflection with respect to the root $\beta$ in the Weyl group of the pair $(\g,\h)$. For $s=s_\beta$ decomposition (\ref{hdec}) contains two terms: $\h_1=\mathbb{R}\beta^\vee$, $\h_0=\h_1^\perp$, where $\h_1^\perp$ is the orthogonal complement to $\h_1$ in $\h$ with respect to the Killing form. A parabolic subalgebra $\p$ associated to $s$ with the help of the decomposition $\h_{\mathbb{R}}=\h_1+\h_0$ can be constructed as follows.

Choose root vectors $e,f \in
\g$ corresponding to roots $\beta$ and $-\beta$ such that
$(e,[e,f],f)$ is an ${\frak sl}_2$-triple
and put $h=[e,f]$. Obviously we have $h\in \h_1$. Put $h_1=h$ and choose any element $h_0\in \h_0$ such that $h_0(\alpha)\neq 0$ for any root $\alpha \in \Delta$ which is not orthogonal to the $s$--invariant subspace $\h_0$ with respect to the natural pairing between $\h_{\mathbb{R}}$ and $\h_{\mathbb{R}}^*$.
Assume also that conditions (\ref{cond}) are satisfied for $h_0$ and $h_1$. Then by the results of Section \ref{slices1} the element $\bar{h}=h_1+h_0$ belongs to a Weyl chamber. Let $\Delta_+$ and $\Gamma$  be the corresponding system of positive roots and of simple positive roots, respectively. Denote by $\b$ the corresponding Borel subalgebra.

The action of the inner derivation ${\rm ad}~ h$ gives $\g$ a short
$\mathbb{Z}$-grading
\begin{equation}\label{sgrad}
\g\,=\,(\g)_{-2}\oplus(\g)_{-1}\oplus(\g)_{0}\oplus(\g)_{1}\oplus(\g)_{2},\qquad\,
(\g)_{m}=\{x\in\g\,|\,\,[h,x]=mx\}
\end{equation}
with $(\g)_{1}\oplus(\g)_{2}$ and
$(\g)_{-1}\oplus (\g)_{-2}$ being Heisenberg Lie algebras. One knows that $(\g)_{\pm 2}$ is spanned by $e$ and $f$, respectively.
Denote by $\z_e$ the centralizer of $e$ in $\g$. The Lie algebra $\z_e$ inherits a
$\mathbb{Z}$-grading from $\g$, $\z_e=(\z_e)_0\oplus(\z_e)_1\oplus (\z_e)_2$,
$(\z_e)_m=(\g)_m$ for $m=1,2$, and the component
 $(\z_e)_0$  is the orthogonal
complement to $\mathbb{C} h$ in $(\g)_0$ with respect to the Killing form. In particular, $(\z_e)_0$ is an
ideal of codimension $1$ in the Levi subalgebra $(\g)_0$.

From the above considerations it also follows that $\beta$ is the longest root in $\Delta$ with respect to the system $\Gamma$ of simple positive roots.

Following the recipe of Section \ref{slices1} we define the parabolic subalgebra $\p$ with the help of the element $\bar{h}_0=h\in \h_1$, $\p=(\g)_{-2}\oplus(\g)_{-1}\oplus(\g)_{0}$. Let $\n=(\g)_{-2}\oplus(\g)_{-1}$ be the nilradical of $\p$ and $\l=(\g)_0$ the Levi factor of $\p$. Let $N$ be the subgroup of $G$ corresponding to the Lie subalgebra $\n\subset \g$.

Note that since  $(\z_e)_0$  is the orthogonal complement to $\mathbb{C} h$ in $(\g)_0$ the Lie subalgebra $\z\subset \l$ in the considered case can be identified with
$(\z_e)_0$, $\z=(\z_e)_0$. Moreover, from the definition of grading (\ref{sgrad}) it follows that the Lie algebra $\n_s$ coincides in this case with $\n$, and $N_sZ=Z_e$, where $Z_e$ is the centralizer of $e$ in $G$ with respect to the adjoint action.

From Propositions \ref{prop1} and \ref{prop2} we deduce that the variety $N_sZs^{-1}$ is a transversal slice to the set of conjugacy classes in $G$, and the conjugation map $N\times N_sZs^{-1}\rightarrow NZs^{-1}N$ is an isomorphism of varieties.

Now let ${\frak k}$ the nilradical of the Borel subalgebra $\b$, and ${\opk}$ the opposite nilpotent Lie subalgebra of $\g$. Since the subspace $\h_0^\perp =\mathbb{C}h$ is one dimensional and $s=-1$ on $\h_0^\perp$ r--matrix (\ref{r}) is reduced in the considered case to the standard r-matrix on $\g$, $r=P_{{\frak k}}-P_{{\opk}}$ (see \cite{Dm}). By Theorem \ref{mainth} the slice $N_sZs^{-1}=Z_es^{-1}$ inherits a Poisson structure from the Poisson manifold $G_*$ associated to r--matrix $r=P_{{\frak k}}-P_{{\opk}}$. Using Proposition \ref{Reduce} we shall explicitly calculate the Poisson structure of the corresponding algebra $W_s(G)$.

To each function $\varphi$ on the slice $Z_es^{-1}$ we associate its $N$--invariant extension $\varphi^*$ to $NZs^{-1}N$. Since for $N$ and $s$ fixed above the map inverse to (\ref{cross}) sends every element $nzs^{-1}n'$ to $({n'}^{-1},n'nzs^{-1})$, $n,n'\in N$, $z\in Z$, we have
\begin{equation}\label{extf}
\varphi^*(nzs^{-1}n')=\varphi(n'nzs^{-1})
\end{equation}
We shall also use the function $\varphi'$ on $Z_e$ associated to $\varphi$ by the formula $\varphi'(nz)=\varphi(nzs^{-1})$, $n\in N$, $z\in Z$.

Let $\varphi, \psi$ be two functions on the slice $Z_es^{-1}$. In order to calculate their Poisson bracket in the algebra $W_s(G)$ we have to find, according to Proposition \ref{Reduce}, the differentials $d\varphi^*, d\psi^*\in T^*(NZs^{-1}N)=T^*(G_*)/N_{NZs^{-1}N}$, $N_{NZs^{-1}N}=\{\alpha\in  T^*G_*: \alpha_x|_{T_x(NZs^{-1}N)}=0, x\in NZs^{-1}N\}$, and then substitute their arbitrary representatives $d\overline{\varphi},d\overline{\psi}$ in $T^*G_*$ into formula (\ref{Pbr}). We shall use the left trivialization of the tangent bundle $TG$. In this trivialization the tangent space $T_{nzs^{-1}}Z_es^{-1}$ to the slice $Z_es^{-1}$ at point $nzs^{-1}$ is identified with $\n+\z+{\rm Ad}(sz^{-1}n^{-1})\n=\n+\z+\opn$, $T_{nzs^{-1}}G=\n+\z+\opn$.

Let $e_i$ be a basis of $\n$, $e_i^*$ the dual basis of $\opn$, $f_i$ a basis of $\z$, and $f_i^*$ the dual basis of $\z$. Put $g=nzs^{-1}$. Recalling definition (\ref{extf}) and the fact that $s$ centralizes $\z$ we have
\begin{eqnarray}
 d\varphi^*(g)= \frac{d}{dt}|_{t=0}\big(\sum_i\varphi^*(ge^{te_i})e_i^*+\sum_i\varphi^*(ge^{tf_i})f_i^*+\sum_i\varphi^*(ge^{te_i^*})e_i\big)= \nonumber \\
 =\frac{d}{dt}|_{t=0}\big(\sum_i\varphi^*(e^{te_i}g)e_i^*+\sum_i\varphi^*(nze^{tf_i}s^{-1})f_i^*+
 \nonumber\\
 +\sum_i\varphi^*(nze^{t{\rm Ad}s^{-1}e_i^*}s^{-1})e_i\big)= \label{diff*} \\
 =\frac{d}{dt}|_{t=0}\big(\sum_i\varphi'(e^{te_i}nz)e_i^*+\sum_i\varphi'(nze^{tf_i})f_i^*+
 \sum_i\varphi'(nze^{t{\rm Ad}s^{-1}e_i^*})e_i\big)= \nonumber \\
 =P_{\opn}\nabla\varphi'(nz)+{\rm Ad}s\nabla'\varphi'(nz), \nonumber
\end{eqnarray}
where $P_{\opn}$ is the orthogonal projector onto the subspace $\opn$ in $\g$, and $\nabla\varphi'$, $\nabla'\varphi'$ are the left (right) gradients of $\varphi'$ regarded as a function on the Lie group $Z_e$.

Now using formula (\ref{diff*}), Proposition \ref{Reduce}, formula (\ref{tau}) for the Poisson bracket on $G_*$ and the relation $\nabla\varphi(g)={\rm Ad}g\nabla'\varphi(g)$ we obtain that
\begin{eqnarray}
\{\varphi,\psi\}(nzs^{-1})= \left\langle r \nabla
\varphi'(nz),\nabla \psi'(nz) \right\rangle +\left\langle r \nabla^{\prime
}\varphi'(nz),\nabla^{\prime }\psi'(nz)\right\rangle- \nonumber \\
-2\left\langle r_{-}
\nabla^{\prime }\varphi'(nz),\nabla \psi'(nz)\right\rangle -2\left\langle
r_{+} \nabla\varphi'(nz),\nabla^{\prime }\psi'(nz)\right\rangle- \label{str*} \\
-2\left\langle \nabla^{\prime
}\varphi'(nz),{\rm Ad}s\nabla\psi'(nz)\right\rangle+2\left\langle \nabla\varphi'(nz),{\rm Ad}s\nabla^{\prime}\psi'(nz)\right\rangle+\nonumber \\
+\left\langle {\rm Ad}(nz)\nabla^{\prime
}\varphi'(nz),\nabla\psi'(nz)\right\rangle-\left\langle {\nabla\varphi'(nz),\rm Ad}(nz)\nabla^{\prime
}\psi'(nz)\right\rangle. \nonumber
\end{eqnarray}

Identifying the slice $Z_es^{-1}$ with the Lie group $Z_e$ we can simply write the formula for the Poisson bracket on $Z_e$ induced by Poisson structure (\ref{str*}),
\begin{eqnarray*}
\{\varphi',\psi'\}(nz)= \left\langle r \nabla
\varphi'(nz),\nabla \psi'(nz) \right\rangle +\left\langle r \nabla^{\prime
}\varphi'(nz),\nabla^{\prime }\psi'(nz)\right\rangle- \nonumber \\
-2\left\langle r_{-}
\nabla^{\prime }\varphi'(nz),\nabla \psi'(nz)\right\rangle -2\left\langle
r_{+} \nabla\varphi'(nz),\nabla^{\prime }\psi'(nz)\right\rangle-  \\
-2\left\langle \nabla^{\prime
}\varphi'(nz),{\rm Ad}s\nabla\psi'(nz)\right\rangle+2\left\langle \nabla\varphi'(nz),{\rm Ad}s\nabla^{\prime}\psi'(nz)\right\rangle+\nonumber \\
+\left\langle {\rm Ad}(nz)\nabla^{\prime
}\varphi'(nz),\nabla\psi'(nz)\right\rangle-\left\langle {\nabla\varphi'(nz),\rm Ad}(nz)\nabla^{\prime
}\psi'(nz)\right\rangle. \nonumber
\end{eqnarray*}

Note that the last formula differs from formula (\ref{tau}) for the Poisson bracket on the Poisson--Lie group $Z_e$ by the last four terms.


\begin{thebibliography}{99}
\bibitem{BK} Bala, K., and Carter, R. W. ``Classes of unipotent elements in simple algebraic groups. I'' {\em Math. Proc. Camb. Phil. Soc.} {79} (1976): 401--425;
``Classes of unipotent elements in simple algebraic groups. II'' {\em Math. Proc. Camb. Phil. Soc.} {80} (1976): 1--17.

\bibitem{BD}  Belavin A.A., and Drinfeld V.G. ``Solutions of the classical
Yang-Baxter equation for simple Lie algebras.'' {\em Funct. Anal. Appl.,} {16} (1981): 159-180.

\bibitem{Br} Brieskorn E., Singular elements of semisimple algebraic groups, in {\em Actes Congr\`{e}s Intern. Math.} {\bf 2} (1970), 279-284.


\bibitem{C} Carter, R. W. ``Conjugacy classes in the Weyl group.'' {\em Compositio Math.} {25} (1972): 1--59.

\bibitem{C1} Carter, R. W. ``Simple groups of Lie type.'' John Wiley \& Sons, Inc.: New York, 1989.

\bibitem{CE} Carter, R. W., and Elkington, G. B. ``A note on parametrization of Conjugacy classes.'' {\em J. Algebra} {20} (1972): 350--354.

\bibitem{ChP} Chari, V., and Pressley, A. {\em A guide to quantum groups.} Cambridge: Cambridge University Press, 1994.

\bibitem{DB} De Boer, J., and Tjin, T. ``Quantization and representation theory of finite W--algebras.'' {\em Comm. math. Phys.}, {158} (1993): 485--516.

\bibitem{Dm} Drinfeld, V.G. ``Quantum groups.'' Proc. Int. Congr. Math. Berkley, California, 1986, Amer. Math. Soc., Providence (1987): 718-820.

\bibitem{E} Elkington, G. B. ``Centralizers of unipotent elements in semisimple algebraic groups.'' {\em J. Algebra} {23} (1972): 137--163.

\bibitem{GG} Gan, W. L., and Ginzburg, V. ``Quantization of Slodowy
slices.'' {\em Int. Math. Res. Not.} { 5} (2002): 243--255.

\bibitem{GG1} Goto, M., and Grosshans, F.D. ``Semisimple Lie algebras.'' Marcel Dekker, Inc.: New York and Basel, 1978.

\bibitem{J} Jantzen, J. C. ``Nilpotent Orbits in Representation
Theory, in Lie Theory, Lie Algebras and Representations.'' J.-P
Anker and B. Orsted eds. {\em Progress in Mathematics}{ 228},
Birkh\"{a}user, 2004.

\bibitem{Ka} Kawanaka, N. ``Generalized Gelfand--Graev representations and Ennola
duality.'' In:  Algebraic groups and related topics (Kyoto/Nagoya,
1983),  175--206, {\em Adv. Stud. Pure Math.} { 6}, Amsterdam:
North-Holland,  1985.

\bibitem{Kac} Kac, V. {\em Infinite dimensional Lie algebras.} Cambridge: Cambridge University Press, 1985.

\bibitem{KL} Kazhdan, D., and Lusztig, G. ``Fixed point varieties on
affine flag manifolds.'' {\em Israel J. Math.} 62 (1988):
129--168.

\bibitem{K} Kostant, B. ``On Whittaker vectors and representation
theory.'' {\em Invent. Math.} 48  (1978): 101--184.

\bibitem{Lu}  Lu, J.H. ``Momentum mapping and reduction of Poisson actions.''
{\em In}: Symplectic geometry, groupoids and integrable systems, Berkeley,
1989. P.Dazord and A.Weinstein (eds), pp.209-226. Springer-Verlag.

\bibitem{Ly} Lynch, T.E. {\em Generalized Whittaker vectors and representation theory. } PhD thesis. MIT, 1979.

\bibitem{Ma} Marsden, J. E., and Ratiu, T.
``Reduction of Poisson manifolds.''
{\em Lett. Math. Phys.} 11 (1986): 161--169.

\bibitem{Pr} Premet, A. ``Special transverse slices and their enveloping
algebras. With an appendix by Serge Skryabin.''  {\em Adv. Math.}
{ 170} (2002): 1--55.

\bibitem{Pr1} Premet, A. ``Enveloping algebras of Slodowy slices and the Joseph ideal.''  {\em J. Eur. Math. Soc.} { 9}  (2007): 487--543.

\bibitem{fact}  Reshetikhin N.Yu., and Semenov-Tian-Shansky M.A. ``Factorization problems for quantum groups.'' in: {\em  Geometry and Physics,
essays in honour of I.M.Gelfand, S.Gindikin and I.M.Singer, eds.} North
Holland, Amsterdam - London - New York (1991): 533-550.

\bibitem{dual}  Semenov-Tian-Shansky M.A. ``Poisson Lie groups, quantum
duality principle and the quantum double.'' {\em Contemporary Math.,} { 175}: 219-248.

\bibitem{RIMS}  Semenov-Tian-Shansky M.A. ``Dressing transformations
and Poisson - Lie group actions.'' {\em Publ. Math. RIMS, } { 21} (1985):
1237-1260.


\bibitem{STSh} Semenov-Tian-Shansky, M. A., and Sevostyanov, A. V.
``Drinfeld--Sokolov reduction for difference operators and
deformations of W--algebras. II General semisimple case.'' {\em Comm.
Math. Phys.} 192 (1998): 631.


\bibitem{S2} Sevostyanov, A. {\em The Whittaker model of the center of the quantum group
and Hecke algebras.} PhD thesis, Uppsala (1999).

\bibitem{S3} Sevostyanov, A. ``Quantum deformation of Whittaker modules and the Toda lattice.''
{\em Duke Math. J.} { 105}  (2000):  211--238.

\bibitem{S1} Sevostyanov, A., The structure of the nilpotent cone, the Kazhdan--Lusztig map and algebraic
group analogues of the Slodowy slices, preprint arXiv:0809.0205.


\bibitem{SL} Slodowy, P. {\em Simple Singularities and Simple Algebraic
Groups.} {\em Lecture Notes in Mathematics} { 815}, Berlin: Springer,
1980.

\bibitem{SP} Spaltenstein, N. ``Polynomials over local fields,
nilpotent orbits and conjugacy classes in Weyl groups.'' {\em
Ast\'{e}risque} { 168} (1988): 191--217.


\bibitem{SP2} Spaltenstein, N. ``On the Kazhdan-Lusztig map for
exceptional Lie algebras.'' {\em Adv. Math.} { 83} (1990):
48--74.

\bibitem{St2} Steinberg, R. ``Regular elements of semisimple
algebraic groups.'' {\em Publ. Math. I.H.E.S.} { 25} (1965):
49--80.

\end{thebibliography}
\end{document}